\newtheorem{theorem}{Theorem}
\newtheorem{proposition}{Proposition}
\newdefinition{definition}{Definition}
\newdefinition{example}{Example}
\newproof{proof}{Proof}
\newproof{sketch}{Sketch of the proof}
\newcommand{\floorfrac}[2]{\left\lfloor\frac{#1}{#2}\right\rfloor}
\newcommand{\ceilfrac}[2]{\left\lceil\frac{#1}{#2}\right\rceil}
\journal{Applied Mathematics and Computation}
\begin{document}

\begin{frontmatter}

\title{On interval transmission irregular graphs\tnoteref{c}}
\tnotetext[c]{This work was supported and funded by Kuwait University Research Grant No.\ SM04/19. 
              \newline\indent\indent
              Corresponding author: Salem Al-Yakoob.}

\author[KU]{Salem Al-Yakoob}
\ead{smalyakoob@gmail.com}

\author[MISANU]{Dragan Stevanovi\'c}
\ead{dragance106@yahoo.com}

\address[KU]{Department of Mathematics, Faculty of Science, Kuwait University, Safat 13060, Kuwait}

\address[MISANU]{Mathematical Institute, Serbian Academy of Sciences and Arts, 
              Kneza Mihaila 36, 11000 Belgrade, Serbia}

\begin{abstract}
Transmission of a vertex~$v$ of a connected graph~$G$ is 
the sum of distances from~$v$ to all other vertices in~$G$.
Graph $G$ is transmission irregular (TI) 
if no two of its vertices have the same transmission,
and $G$ is interval transmission irregular (ITI) 
if it is TI and the vertex transmissions of $G$ form a sequence of consecutive integers.
Here we give a positive answer to the question of Dobrynin [Appl Math Comput 340 (2019), 1--4]
of whether infinite families of ITI graphs exist.
\end{abstract}

\begin{keyword}
Wiener complexity \sep Vertex transmission \sep Transmission irregular graph.
\MSC 05C12, 05C05, 05C38, 05C76.
\end{keyword}

\end{frontmatter}

\section{Introduction}

For a simple graph $G$, we denote by $V_G$ and $E_G$ the sets of its vertices and edges, respectively.
Degree $\deg_G(u)$ of vertex~$u\in V_G$ is defined as the number of edges in~$E_G$ that are incident to~$u$.
A walk of length~$k$ between two vertices $u$ and~$v$ of~$G$ is 
a sequence of its vertices $W\colon u=w_0,w_1,\dots,w_k=v$
such that $w_i$ and $w_{i+1}$ are adjacent for each $i=0,\dots,k-1$.
Graph $G$ is connected if there exists a walk between each pair of vertices of~$G$.
The distance $d_G(u,v)$ between vertices $u$ and~$v$ of connected graph~$G$ is then
the length of the shortest walk between $u$ and~$v$,
while the transmission of vertex~$u$ is 
the sum of distances from~$u$ to all other vertices of~$G$:
$$
Tr_G(u)=\sum_{v\in V_G} d_G(u,v).
$$

A graph is {\em transmission irregular} (TI) if no two of its vertices have the same transmission.
Recent interest in TI graphs was motivated by the observation of Alizadeh and Klav\v zar~\cite{alkl} 
that TI graphs have maximal Wiener index complexity,
where the complexity of a summation-type topological index is defined
as the number of its distinct constituent summands~\cite{aaks,alkl2,kjrmp}.
Alizadeh and Klav\v zar~\cite{alkl} also noted that TI graphs are rather rare:
on one hand, almost all graphs have diameter two~\cite{blha},
while on the other hand, 
transmissions of vertices in a graph~$G$ with diameter two are 
directly related to their degree through $Tr_G(u)=2(|V_G|-1)-\mathop{deg}_G(u)$.
As every graph contains a pair of vertices with equal degrees,
this implies that almost all graphs are not TI.

Accordingly, this motivated researchers to explore constructions of infinite families of TI graphs.
Alizadeh and Klav\v zar~\cite{alkl} characterized 
TI starlike trees with three branches, of which one branch has length one,
while the present authors in~\cite{yast} extended this to
the characterization of all TI starlike trees with three branches.
Xu and Klav\v zar~\cite{xukl} showed that
a starlike tree whose branch lengths form a sequence of consecutive integers is TI 
if this tree has an odd number of vertices.
Moreover, in a short series of papers,
Dobrynin constructed infinite families of
TI trees of even order \cite{dobr1}, 2-connected TI graphs~\cite{dobr2,dobr0} and 3-connected cubic TI graphs~\cite{dobr3}.

Observing that vertex transmissions in 
one of the examples of small 2-connected TI graphs found in~\cite{dobr2} are consecutive,
Dobrynin introduced therein a particular subclass of TI graphs characterized as follows:
a graph is {\em interval transmission irregular} (ITI) 
if it is TI and 
the set of its vertex transmissions is equal to $[a,b]\cap\mathbb{Z}$ for some $a,b\in\mathbb{Z}$.
Dobrynin then proposed in both~\cite{dobr2,dobr0} the question of whether there exist infinite families of ITI graphs,
and our goal in this research effort is to give an affirmative answer to this question.

All ITI graphs with up to ten vertices are shown in Fig.~\ref{fig-small-ITI}.
Initially, our search for an infinite ITI family was focused
on the fourth graph from left in the second row of this figure,
because this is the only graph in Fig.~\ref{fig-small-ITI} without any pendent vertex and
this graph motivated Dobrynin in~\cite{dobr2} to define the concept of ITI graphs.
This particular graph suggests that adding paths between the vertices of a small core may yield new TI graphs,
and this idea is explored in Section~\ref{sc-chordal-paths},
with pertinent findings presented therein.
Computational search unearthed many ITI graphs of this form.
Note that our theoretical studies led to several new infinite families of TI graphs 
(as presented in Section~\ref{sc-chordal-paths}),
however it turned out that these new families contain ITI graphs only sporadically,
leaving the multitude of ITI graphs of this form unexplained.
A particular result from this section, that may be of interest in its own right, is
the fact that transmissions of vertices lying on an internal path form a unimodal sequence
(Theorem~\ref{th-unimodal-transmissions}).
Another observation that is worth stating is that 
the Cartesian product of two TI graphs
with relatively prime numbers of vertices is again TI,
provided that at least one of the factors is actually a {\em modulo transmission irregular} (MTI) graph,
where an MTI graph is defined as a graph in which 
no two vertices have the same transmission modulo the number of its vertices.
This is proved in Theorem \ref{th-product-with-mti},
which is the subject of Section~\ref{sc-cartesian-product}.
At the end, by studying numerous examples of ITI graphs on 11 vertices,
we noticed that a sizable number of these examples have diameter three
and that they usually have a pair of vertices of large degrees that differ by one.
Finally, taking this general structure as a starting point,
led us relatively quickly to an instance of an infinite family of ITI graphs
that is described in Section~\ref{sc-infinite-iti},
thus affirmatively settling Dobrynin's question raised in~\cite{dobr2}.

\begin{figure}[ht!]
\begin{center}
\includegraphics[width=\textwidth]{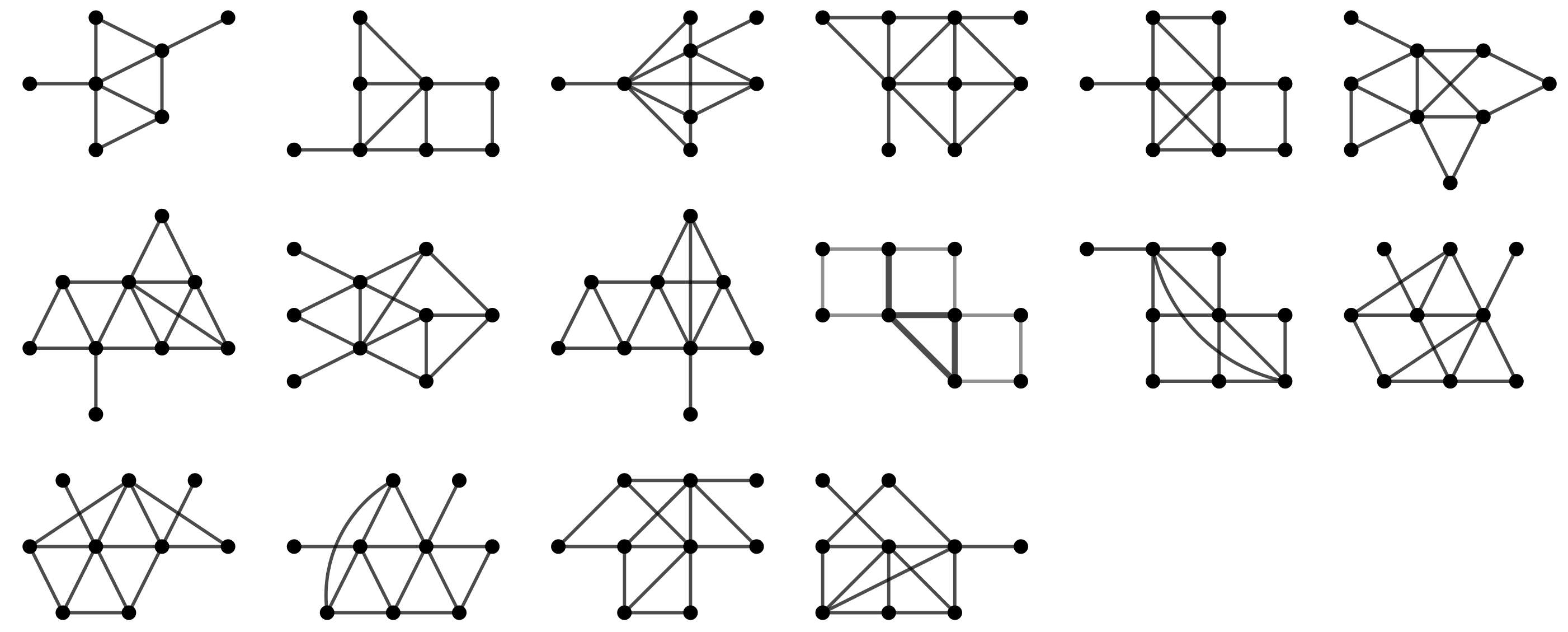}
\end{center}
\caption{ITI graphs with up to ten vertices.}
\label{fig-small-ITI}
\end{figure}

\section{Distances in graphs with added chordal paths}
\label{sc-chordal-paths}

Results of preliminary computational experiments led us to observe that 
a number of classes of TI graphs can be found
when some vertices of the underlying core are joined by paths of different lengths.
This was also the case with an infinite family of 2-connected TI graphs
identified by Dobrynin in~\cite{dobr1}.
To simplify theoretical treatment of transmissions in such graphs,
we first consider how to calculate distances between their vertices.

\begin{definition}
Let $G$ be a given graph.
For some $k\geq 0$,
let $\mathcal{A}=\{P_1,\dots,P_k\}$
where each~$P_i$ is a triplet $(u_i,v_i,s_i)$ 
consisting of two vertices $u_i,v_i\in V_G$ and a nonnegative integer~$s_i$.
The graph $G+\mathcal{A}$ is obtained from~$G$ by adding to it, 
for each $1\leq i\leq k$, a new path from $u_i$ to~$v_i$ with $s_i$ internal vertices.
The graph $G$ is called the {\em core} of $G+\mathcal{A}$,
while the paths added to it are called the {\em chordal paths}.
\end{definition}

In the above definition,
chordal paths from~$\mathcal{A}$ actually represent internal paths in~$G+\mathcal{A}$.
We opted to use different terminology here in order to 
emphasize the core and the addition of new paths to it,
instead of just considering these paths as parts of the final graph.
Fig.~\ref{fig-1} illustrates the above definition,
where to the core~$G$, shown on the left-hand side,
three chordal paths $\mathcal{A}=\{P_1,P_2,P_3\}$ are added,
where $P_1=P_2=(b,d,3)$ and $P_3=(c,e,3)$,
producing the graph $G+\mathcal{A}$ shown in the middle.

\begin{figure}[ht!]
\begin{center}
\includegraphics[scale=0.7]{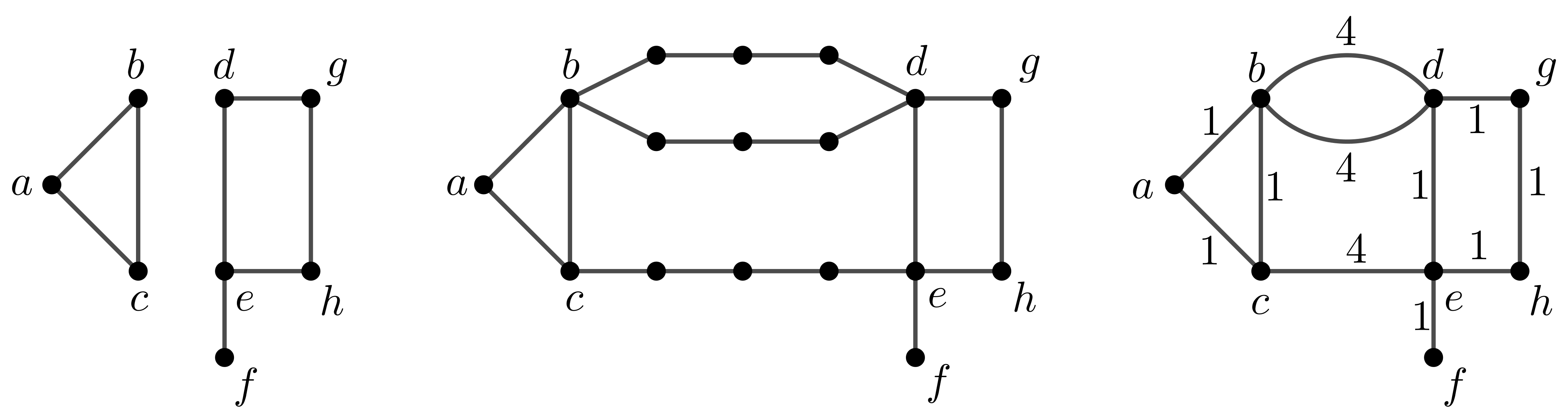}
\end{center}
\caption{The core $G$, the graph $G+\{P_1,P_2,P_3\}$ 
         and the auxiliary weigthed graph~$G'$ (see text).}
\label{fig-1}
\end{figure}

Assume we are given the core~$G$ and the set~$\mathcal{A}$ of chordal paths.
Each of the internal vertices of chordal paths in~$G+\mathcal{A}$ has degree two,
so that
a shortest walk in~$G+\mathcal{A}$ that connects two vertices of the core~$G$
either includes a whole chordal path or avoids it completely.
From that aspect, 
we can treat a chordal path with $s$~internal vertices between two vertices $u$ and~$v$ of~$G$ 
simply as a new edge of weight~$s+1$ (which is the length of the chordal path) between $u$ and~$v$ in~$G$.
Thus we can produce from~$G$
an auxiliary graph~$G'$ by adding weighted edges corresponding to chordal paths,
so that distances in~$G'$ are equal to distances between the core vertices in~$G+\mathcal{A}$.
For the core~$G$ shown in Fig.~\ref{fig-1},
its auxiliary graph~$G'$ is shown on the right-hand side.
Note that $G'$ has the same number of vertices as~$G$, 
unlike $G+\mathcal{A}$ which adds a number of internal vertices to~$G$,
so that applying the Floyd-Warshall algorithm to~$G'$ is more efficient than applying it to~$G+\mathcal{A}$.

Having determined distances between the core vertices in~$G+\mathcal{A}$ 
by using the auxiliary weighted graph~$G'$,
it is easy to find out distances that involve internal vertices of chordal paths in $G+\mathcal{A}$ as well.
Assume that $c$ is an internal vertex of the chordal path~$P=(u,v,s)$ of~$G+\mathcal{A}$,
so that $c$ is at distance~$k$ from~$u$ (and hence at distance $s+1-k$ from~$v$).
A shortest walk in~$G+\mathcal{A}$ from~$c$ to another core vertex~$b$
must first follow the chordal path~$P$ straight to either $u$ or~$v$,
after which it will follow a shortest path from either $u$ or~$v$ to~$b$ in the auxiliary graph~$G'$,
so that
\begin{equation}
\label{eq-from-chord-to-core}
d_{G+\mathcal{A}}(c,b) = \min\{k + d_{G'}(u,b), \quad s+1-k + d_{G'}(v,b)\}.
\end{equation}
If $c'$ is an internal vertex of another chordal path~$P'=(u',v',s')\neq P$,
so that $c'$ is at distance $k'$ from $u'$ (and at distance $s'+1-k'$ from $v'$),
then a shortest walk from~$c$ to~$c'$ in $G+\mathcal{A}$
follows $P$ straight to either $u$ or~$v$,
then a shortest walk from either $u$ or~$v$ to either $u'$ or~$v'$ in~$G'$,
after which it follows $P'$ straight to~$c'$.
Hence
\begin{eqnarray}
\label{eq-from-chord-to-chord-different}
d_{G+\mathcal{A}}(c,c') 
    &=& \min\{k + d_{G'}(u,u') + k', \quad k + d_{G'}(u,v') + s'+1-k', \\
\nonumber
    & & \phantom{\min\{} s+1-k + d_{G'}(v,u') + k', \quad s+1-k + d_{G'}(v,v') + s'+1-k'\}.
\end{eqnarray}
Note that if $c''$ is an internal vertex of the same chordal path~$P$ as~$c$,
so that $c''$ is at distance~$k''$ from~$u$ (and at distance $s+1-k''$ from~$v$),
then a shortest walk from $c$ to~$c''$ may, on one hand, follow~$P$ directly between these vertices,
while on the other hand,
it may follow $P$ from~$c$ to either $u$ or~$v$,
then a shortest walk from $u$ to $v$ in the auxiliary graph~$G'$,
after which it follows~$P$ again from the other side back to~$c'$.
Hence in this case
\begin{equation}
\label{eq-from-chord-to-chord-same}
d_{G+\mathcal{A}}(c,c'')=\min\{|k''-k|, \quad k + d_{G'}(u,v) + s+1-k'', \quad s+1-k + d_{G'}(v,u) + k''\}.
\end{equation}

Let us now illustrate 
the process of calculating distances and transmissions in graphs with added chordal paths
on a few selected examples that will lead to several new infinite families of TI graphs.

\begin{figure}[ht!]
\begin{center}
\includegraphics[scale=0.7]{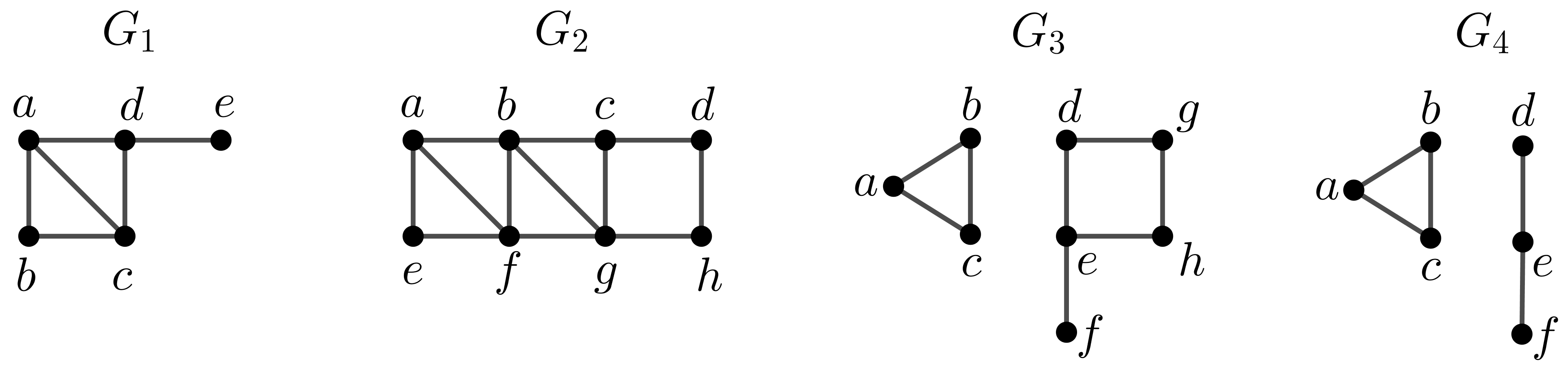}
\end{center}
\caption{The cores used in Examples \ref{ex-1}--\ref{ex-4}.}
\label{fig-2}
\end{figure}

\begin{example}
\label{ex-1}
%
%
Let $G_1$ be the core shown in Fig.~\ref{fig-2},
let $\mathcal{A}^1_n=\{P_1\}$ with $P_1=(a,b,n)$ for some integer $n\geq 2$,
and let $\mathcal{G}^1_n=G_1+\mathcal{A}^1_n$.
Distances in the auxiliary weighted graph $G'_1$ yield
distances between the core vertices of $\mathcal{G}^1_n$,
shown in matrix form in Fig.~\ref{fig-ex-1}.

\begin{figure}[ht!]
\begin{center}
$$
\begin{blockarray}{ccccccc}
  & a & b & c & d & e & x \\
\begin{block}{c[ccccc|c]}
a & \,0 & 1 & 1 & 1 & 2 & \min\{k,   n-k+2\} \\
b & \,1 & 0 & 1 & 2 & 3 & \min\{k+1, n-k+1\} \\
c & \,1 & 1 & 0 & 1 & 2 & \min\{k+1, n-k+2\} \\
d & \,1 & 2 & 1 & 0 & 1 & \min\{k+1, n-k+3\} \\
e & \,2 & 3 & 2 & 1 & 0 & \min\{k+2, n-k+4\} \\
\end{block}
\end{blockarray}
$$
\caption{Matrix of distances from the core vertices of~$\mathcal{G}^1_n$.}
\label{fig-ex-1}
\end{center}
\end{figure}

Now, let $x$ be an internal vertex of the chordal path~$P_1$
at distance~$k$ from~$a$ (and at distance $n+1-k$ from~$b$) for $1\leq k\leq n$.
Eq.~(\ref{eq-from-chord-to-core}) yields for any core vertex~$z$
$$
d_{\mathcal{G}^1_n}(x,z) = \min\{k + d_{G'_1}(a,z), \quad n+1-k + d_{G'_1}(b,z)\},
$$
which is shown in the last column in Fig.~\ref{fig-ex-1}.
If $x'$ is another internal vertex of~$P_1$ at distance~$k'$ from~$a$,
then from Eq.~(\ref{eq-from-chord-to-chord-same}) we have
$$
d_{\mathcal{G}^1_n}(x,x') = \min\{|k'-k|, \quad n+2+k-k', \quad n+2-k+k' \}.
$$
Summing these expressions and taking into account that
for constants $\alpha$ and~$\beta$ with $\alpha-\beta\leq n-2$ 
the following identity holds
$$
\sum_{k=1}^n \min\{k+\alpha, n-k+\beta\}
 = \frac14\left(n^2 + n(2\beta+2\alpha-1) - (\beta-\alpha)(\beta-\alpha-1)\right) 
  +\frac12\floorfrac{n+\beta-\alpha}2,
$$
we accordingly obtain the transmissions of the vertices of $\mathcal{G}^1_n$ as follows:
\begin{alignat*}{2}
Tr_{\mathcal{G}^1_n}(a) &= 5+\sum_{k=1}^n \min\{k,   n-k+2\}  
 &&=\frac{n^2+3n+18}4+\frac12\floorfrac{n+2}2, \\
Tr_{\mathcal{G}^1_n}(b) &= 7+\sum_{k=1}^n \min\{k+1, n-k+1\}  
 &&=\frac{n^2+3n+28}4+\frac12\floorfrac n2, \\
Tr_{\mathcal{G}^1_n}(c) &= 5+\sum_{k=1}^n \min\{k+1, n-k+2\}  
 &&=\frac{n^2+5n+20}4 + \frac12\floorfrac{n+1}2, \\
Tr_{\mathcal{G}^1_n}(d) &= 5+\sum_{k=1}^n \min\{k+1, n-k+3\}  
 &&=\frac{n^2 + 7n + 18}4 + \frac12\floorfrac{n+2}2, \\      
Tr_{\mathcal{G}^1_n}(e) &= 8+\sum_{k=1}^n \min\{k+2, n-k+4\}  
 &&=\frac{n^2 + 11n + 30}4 + \frac12\floorfrac{n+2}2,
\end{alignat*}
%
%
%
while
\begin{eqnarray*}
Tr_{\mathcal{G}^1_n}(x) 
 &=& \min\{k,   n-k+2\} + \min\{k+1, n-k+1\} + \min\{k+1, n-k+2\} + \min\{k+1, n-k+3\} \\
 &+& \min\{k+2, n-k+4\} + \sum_{k'=1}^n \min\{|k'-k|, n+2+k-k', n+2-k+k'\}.
\end{eqnarray*}
For a particular choice $n=6m+1$ we obtain the following transmissions,
listed here in decreasing order:
%
%
\begin{eqnarray*}
Tr_{\mathcal{G}^1_{6m+1}}(e) &=& 9m^2+21m+11, \\
Tr_{\mathcal{G}^1_{6m+1}}(d) &=& 9m^2+15m+ 7, \\
Tr_{\mathcal{G}^1_{6m+1}}(c) &=& 9m^2+12m+ 7, \\
Tr_{\mathcal{G}^1_{6m+1}}(b) &=& 9m^2+ 9m+ 8, \\
Tr_{\mathcal{G}^1_{6m+1}}(a) &=& 9m^2+ 9m+ 6,
\end{eqnarray*}
and
\begin{eqnarray*}
Tr_{\mathcal{G}^1_{6m+1}}(x) &=& \left\{\begin{array}{rl}
9m^2 + 9m + 6 + 3k, & \mbox{if } 1\leq k\leq 3m+1, \\
9m^2 +27m +14 - 3k, & \mbox{if } 3m+2\leq k\leq 6m+1.
\end{array}\right.
\end{eqnarray*}
It is apparent that $Tr_{\mathcal{G}^1_{6m+1}}(x)$, for $1\leq k\leq 3m+1$, is larger than $Tr_{\mathcal{G}^1_{6m+1}}(a)$ 
and, since $Tr_{\mathcal{G}^1_{6m+1}}(x)$ is divisible by three, it is different from all other transmissions.
Similarly, $Tr_{\mathcal{G}^1_n}(x)$ for $3m+2\leq k\leq 6m+1$ is congruent to~2 modulo~3,
and as a result, $Tr_{\mathcal{G}^1_{6m+1}}(x)$ is equal to either $Tr_{\mathcal{G}^1_n}(e)$ or $Tr_{\mathcal{G}^1_n}(b)$.
However, the equality $Tr_{\mathcal{G}^1_n}(x)=Tr_{\mathcal{G}^1_n}(e)$ would imply $k=2m+1$,
while the equality $Tr_{\mathcal{G}^1_n}(x)=Tr_{\mathcal{G}^1_n}(b)$ leads to $k=6m+2$,
contradictory to the assumed range of~$k$ 
for $m\geq 1$.
Hence we have the following proposition.
\begin{proposition}
\label{pr-1}
For $m\geq 1$ the graph $\mathcal{G}^1_{6m+1}$ is transmission irregular.
\end{proposition}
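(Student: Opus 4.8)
The plan is to show directly that the $6m+6$ transmissions tabulated in the Example are pairwise distinct, and the organizing device is their residue modulo~$3$; throughout, $Tr$ abbreviates the transmission in $\mathcal{G}^1_{6m+1}$. The internal-vertex transmissions separate cleanly along these lines: on the increasing arm $1\le k\le 3m+1$ the value $9m^2+9m+6+3k$ is divisible by~$3$, while on the decreasing arm $3m+2\le k\le 6m+1$ the value $9m^2+27m+14-3k$ is congruent to~$2$ modulo~$3$. Reducing the five core transmissions as well, one gets $Tr(a)\equiv 0$, $Tr(c)\equiv Tr(d)\equiv 1$ and $Tr(b)\equiv Tr(e)\equiv 2 \pmod 3$. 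Because two transmissions in distinct residue classes can never coincide, the proposition reduces to three independent within-class comparisons.

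Two of these classes are straightforward. The residue-$1$ class holds only the core vertices $c$ and $d$, and $Tr(d)-Tr(c)=3m$, which is positive precisely because $m\ge 1$; this is in fact one of the two places where the hypothesis is essential. The residue-$0$ class consists of $a$ together with the whole increasing arm; there the arm value increases strictly in~$k$ with common difference~$3$, so its members are mutually distinct, and since its least value $9m^2+9m+9$ (attained at $k=1$) already exceeds $Tr(a)=9m^2+9m+6$, the vertex $a$ is separated from the arm as well.

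The remaining residue-$2$ class is the real obstacle, since it gathers both core vertices $b$ and $e$ together with the entire decreasing arm, and modular bookkeeping alone cannot separate them. I would first observe that $Tr(e)-Tr(b)=12m+3>0$ and that the arm value strictly decreases in~$k$, so $b$, $e$ and the arm vertices are each internally distinct. It then remains only to exclude an arm vertex from equaling $Tr(b)$ or $Tr(e)$: setting $9m^2+27m+14-3k$ equal to $Tr(e)$ forces $k=2m+1$, and setting it equal to $Tr(b)$ forces $k=6m+2$. The crux is that $2m+1$ lies strictly below the lower endpoint $3m+2$ of the admissible window while $6m+2$ lies strictly above its upper endpoint $6m+1$, so for $m\ge 1$ neither solution is realized.

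With all three residue classes shown to be internally collision-free, no two vertices of $\mathcal{G}^1_{6m+1}$ can share a transmission, which is exactly the assertion that the graph is transmission irregular. I expect the residue-$2$ analysis to be the most delicate step, as it is the only class requiring equations to be solved rather than a single inequality to be checked; the hypothesis $m\ge 1$ itself enters most sharply in the residue-$1$ comparison, where $m=0$ would make $Tr(c)$ and $Tr(d)$ collide.
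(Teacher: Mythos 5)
Your proposal is correct and follows essentially the same route as the paper's own argument: classify all transmissions by their residue modulo~$3$, note that the increasing arm (residue~$0$) exceeds $Tr(a)$, and rule out collisions of the decreasing arm (residue~$2$) with $Tr(b)$ and $Tr(e)$ by showing the forced values $k=6m+2$ and $k=2m+1$ fall outside the admissible window. Your write-up is slightly more explicit than the paper's in verifying the pairwise distinctness of the core transmissions (e.g., $Tr(d)-Tr(c)=3m$ and $Tr(e)-Tr(b)=12m+3$), which the paper leaves implicit by listing them in decreasing order, but the underlying idea is identical.
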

Note that for the values of~$n$ that are not congruent to~1 modulo~6,
there always appear one or two pairs of vertices with equal transmissions in~$\mathcal{G}^1_n$.
\end{example}

\begin{example}
\label{ex-2}
%
Let $G_2$ be the core shown in Fig.~\ref{fig-2},
let $\mathcal{A}^2_n=\{P_1\}$ with $P_1=(e,h,n)$ for $n\geq 3$,
and let $\mathcal{G}^2_n=G_2+\mathcal{A}^2_n$.
Using the auxiliary weighted graph~$G'_2$ and 
Eqs.~(\ref{eq-from-chord-to-core})--(\ref{eq-from-chord-to-chord-same}),
in analogous manner as in Example~\ref{ex-1},
we can obtain expressions for transmissions of the core vertices of $\mathcal{G}^2_n$:
\begin{eqnarray*}
Tr_{\mathcal{G}^2_n}(d) &=& \frac{n^2 + 11n + 58}4 + \frac12\floorfrac{n-2}2, \\
Tr_{\mathcal{G}^2_n}(c) &=& \frac{n^2 + 11n + 48}4 + \frac12\floorfrac n2, \\
Tr_{\mathcal{G}^2_n}(a) &=& \frac{n^2 + 9n + 46}4 + \frac12\floorfrac{n+3}2, \\
Tr_{\mathcal{G}^2_n}(b) &=& \frac{n^2 + 9n + 40}4 + \frac12\floorfrac{n+1}2, \\
Tr_{\mathcal{G}^2_n}(e) &=& \frac{n^2 + 7n + 52}4 + \frac12\floorfrac{n+4}2, \\
Tr_{\mathcal{G}^2_n}(h) &=& \frac{n^2 + 7n + 50}4 + \frac12\floorfrac{n-2}2, \\
Tr_{\mathcal{G}^2_n}(f) &=& \frac{n^2 + 7n + 42}4 + \frac12\floorfrac{n+2}2, \\
Tr_{\mathcal{G}^2_n}(g) &=& \frac{n^2 + 7n + 40}4 + \frac12\floorfrac{n}2,
\end{eqnarray*}
while the transmission of the vertex~$x$ of the chordal path~$P_1$ at distance~$k$ from~$e$ is given by
\begin{eqnarray*}
Tr_{\mathcal{G}^2_n}(x) &=& \min\{k+1, n-k+4\}+\min\{k+2, n-k+3\}+\min\{k+3, n-k+3\}+\min\{k+4, n-k+2\} \\
                        &+& \min\{k, n-k+4\}+\min\{k+1, n-k+3\}+\min\{k+2, n-k+2\}+\min\{k+3, n-k+1\} \\
                        &+& \sum_{k'=1}^n \min\{|k'-k|, n+4+k-k', n+4-k+k'\}.
\end{eqnarray*}
For the particular choice $n=4m+1$ with $k\geq 1$:
\begin{eqnarray*}
Tr_{\mathcal{G}^2_n}(d) &=& 4m^2+14m+17, \\
Tr_{\mathcal{G}^2_n}(c) &=& 4m^2+14m+15, \\
Tr_{\mathcal{G}^2_n}(a) &=& 4m^2+12m+15, \\
Tr_{\mathcal{G}^2_n}(b) &=& 4m^2+12m+13, \\
Tr_{\mathcal{G}^2_n}(e) &=& 4m^2+10m+16, \\
Tr_{\mathcal{G}^2_n}(h) &=& 4m^2+10m+14, \\
Tr_{\mathcal{G}^2_n}(f) &=& 4m^2+10m+13, \\
Tr_{\mathcal{G}^2_n}(g) &=& 4m^2+10m+12,
\end{eqnarray*}
and
\begin{eqnarray*}
Tr_{\mathcal{G}^2_n}(x) &=& \left\{\begin{array}{ll}
4 m^2 + 10 m + 16 + 4k, & \mbox{if }1\leq k\leq 2m-1, \\
4 m^2 + 18 m + 15, & \mbox{if }k=2m, \\
4 m^2 + 18 m + 16, & \mbox{if }k=2m+1, \\
4 m^2 + 26 m + 22 - 4k, & \mbox{if }2m+2\leq k\leq 4m+1.
\end{array}\right.
\end{eqnarray*}
It is now easy to see that the following proposition holds.
\begin{proposition}
\label{pr-2}
For $m\geq 1$ the graph $\mathcal{G}^2_{4m+1}$ is transmission irregular.
\end{proposition}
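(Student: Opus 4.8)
The plan is to show that the $8+(4m+1)$ transmission values listed above are pairwise distinct, which is exactly the assertion that $\mathcal{G}^2_{4m+1}$ is TI. Every one of these transmissions is a quadratic in $m$ whose leading term is $4m^2$, so two of them coincide only when their linear-plus-constant parts agree; in particular a necessary condition for equality is that their difference be divisible by $4$, and this reduction modulo~$4$ will carry most of the work.

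First I would dispose of the eight core transmissions. Writing each as $4m^2+\ell m+c$, the admissible slopes are $\ell\in\{14,12,10\}$ with intercepts spread over an interval of length at most $5$. Two core values of equal slope have distinct intercepts and hence differ, while two of different slopes differ by at least $2m$ in the linear term, which for $m\ge 1$ already exceeds the intercept gap; a check of the finitely many intercept differences confirms that no equation $(\ell-\ell')m=c'-c$ has a positive integer solution, so the core transmissions are pairwise distinct, with $Tr(d)=4m^2+14m+17$ the largest among them.

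Next I would treat the chordal vertex~$x$. Its transmission splits into an ascending arithmetic progression $A_k=4m^2+10m+16+4k$ for $1\le k\le 2m-1$, a descending progression $D_k=4m^2+26m+22-4k$ for $2m+2\le k\le 4m+1$, and two isolated peak values at $k=2m$ and $k=2m+1$ (in agreement with the unimodality guaranteed by Theorem~\ref{th-unimodal-transmissions}). Each progression has common difference $\pm4$ and is therefore injective in~$k$; moreover $A_k-D_{k'}\equiv 2\pmod 4$ for all $k,k'$, so the two progressions never collide. The peaks $4m^2+18m+15$ and $4m^2+18m+16$ are consecutive integers and both strictly exceed the progression maxima $A_{2m-1}=4m^2+18m+12$ and $D_{2m+2}=4m^2+18m+14$, so the $4m+1$ values of $Tr_{\mathcal{G}^2_{4m+1}}(x)$ are pairwise distinct.

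Finally I would rule out equalities between an $x$-transmission and a core transmission, where the modulo~$4$ bookkeeping does the work. Reducing differences modulo~$4$, the ascending values can match only the core transmissions of $e$ and $g$, and equating $A_k$ with $Tr(e)$ or $Tr(g)$ forces $k=0$ or $k=-1$, outside $1\le k\le 2m-1$; the descending values can match only $Tr(h)$, and $D_k=Tr(h)$ yields $k=4m+2$, again out of range. The two peaks are handled by magnitude alone: since $4m^2+18m+15-(4m^2+14m+17)=4m-2>0$ for $m\ge 1$, both exceed $Tr(d)$ and hence every core value. The only mildly delicate point — and the place I would be most careful — is that the residue classes modulo~$4$ of these quadratics depend on the parity of~$m$ (for instance $2m\equiv 0$ or $2\pmod 4$), so rather than fixing residues I would phrase every comparison through the divisibility by~$4$ of a \emph{difference}, which is parity-independent. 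Beyond this the argument is routine bookkeeping once the transmission formulas are in hand, matching the paper's remark that the proposition is now easy to see.
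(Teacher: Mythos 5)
Your proof is correct and takes essentially the same route as the paper, which simply lists the transmission formulas for $\mathcal{G}^2_{4m+1}$ and declares the distinctness check ``easy to see'': your mod-$4$ bookkeeping for the two progressions, the range arguments for the out-of-range solutions $k=0,-1,4m+2$, and the magnitude comparison for the two peak values are exactly that omitted verification, carried out in full. One minor wording slip: for $m\le 2$ the linear gap $2m$ does \emph{not} yet exceed the maximal intercept gap of $5$, but this is harmless because the clause that actually does the work --- checking that no equation $(\ell-\ell')m=c'-c$ has a positive integer solution (the only positive right-hand sides being $1$ and $3$, never multiples of $2m$ or $4m$) --- is correct.
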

Among these graphs only $\mathcal{G}^2_5$ is ITI.
On the other hand, for the values of~$n$ that are not congruent to~1 modulo~4,
the graph $\mathcal{G}^2_n$ contains at least one pair of vertices with equal transmissions.
\end{example}

\begin{example}
\label{ex-3}
Let $G_3$ be the disconnected core shown in Fig.~\ref{fig-2},
let $\mathcal{A}^3_n=\{P_1,P_2\}$ with $P_1=(b,d,n)$ and $P_2=(c,e,n)$ for some integer $n\geq 1$,
and let $\mathcal{G}^3_n=G_3+\mathcal{A}^3_n$.
Calculating distances in the auxiliary weighted graph~$G'_3$ 
yields distances between the core vertices of~$\mathcal{G}^3_n$,
shown in matrix form in Fig.~\ref{fig-ex-3}.

Further, if $x$ is the internal vertex of the chordal path~$P_1$ 
at distance~$k$ from~$b$ (and at distance $n+1-k$ from~$d$) for $1\leq k\leq n$,
and $y$ is the internal vertex of the chordal path~$P_2$
at distance~$l$ from~$c$ (and at distance $n+1-l$ from~$e$) for $1\leq l\leq n$,
then Eq.~(\ref{eq-from-chord-to-core}) yields that for any core vertex~$z$
$$
d_{\mathcal{G}^3_n}(x,z) = \min\{k + d_{G'_3}(b,z), \quad n+1-k + d_{G'_3}(d,z)\}
$$
and
$$
d_{\mathcal{G}^3_n}(y,z) = \min\{l + d_{G'_3}(c,z), \quad n+1-l + d_{G'_3}(e,z)\}.
$$
These distances are shown in the last two columns in Fig.~\ref{fig-ex-3}.

\begin{figure}[ht!]
\begin{center}
$$
\begin{blockarray}{ccccccccccc}
  & a   & b   & c   & d   & e   & f   & g   & h   & x & y \\
\begin{block}{c[cccccccc|cc]}
a & 0   & 1   & 1   & n+2 & n+2 & n+3 & n+3 & n+3 & k+1   & l+1   \\
b & 1   & 0   & 1   & n+1 & n+2 & n+3 & n+2 & n+3 & k     & l+1   \\
c & 1   & 1   & 0   & n+2 & n+1 & n+2 & n+3 & n+2 & k+1   & l     \\
d & n+2 & n+1 & n+2 & 0   & 1   & 2   & 1   & 2   & n-k+1 & n-l+2 \\
e & n+2 & n+2 & n+1 & 1   & 0   & 1   & 2   & 1   & n-k+2 & n-l+1 \\
f & n+3 & n+3 & n+2 & 2   & 1   & 0   & 3   & 2   & n-k+3 & n-l+2 \\
g & n+3 & n+2 & n+3 & 1   & 2   & 3   & 0   & 1   & n-k+2 & n-l+3 \\
h & n+3 & n+3 & n+2 & 2   & 1   & 2   & 1   & 0   & n-k+3 & n-l+2 \\
\end{block}
\end{blockarray}
$$
\caption{Matrix of distances from the core vertices of~$\mathcal{G}^3_n$.}
\label{fig-ex-3}
\end{center}
\end{figure}
From Eq.~(\ref{eq-from-chord-to-chord-different}) we obtain
$$
d_{\mathcal{G}^3_n}(x,y) = \min\{k+l+1, \quad 2n+3-k-l\}.
$$
Finally, 
let $x'$ be another internal vertex of the chordal path~$P_1$ 
at distance~$k'$ from~$b$ for $1\leq k'\leq n$,
and let $y'$ be the internal vertex of the chordal path~$P_2$
at distance~$l'$ from~$c$ for $1\leq l'\leq n$.
Then from Eq.~(\ref{eq-from-chord-to-chord-same}) we have
$$
d_{\mathcal{G}^3_n}(x,x')=|k'-k|
\quad\mbox{and}\quad
d_{\mathcal{G}^3_n}(y,y')=|l'-l|.
$$
Summing these expressions 
we obtain transmissions of the vertices of~$\mathcal{G}^3_n$,
listed here in decreasing order:
\begin{alignat*}{2}
Tr_{\mathcal{G}^3_n}(a) &= 5n+15+\sum_{k=1}^n (k+1)  +\sum_{l=1}^n (l+1)   &&= n^2+8n+15, \\
Tr_{\mathcal{G}^3_n}(f) &= 3n+16+\sum_{k=1}^n (n-k+3)+\sum_{l=1}^n (n-l+2) &&= n^2+7n+16, \\
Tr_{\mathcal{G}^3_n}(g) &= 3n+15+\sum_{k=1}^n (n-k+2)+\sum_{l=1}^n (n-l+3) &&= n^2+7n+15, \\
Tr_{\mathcal{G}^3_n}(h) &= 3n+14+\sum_{k=1}^n (n-k+3)+\sum_{l=1}^n (n-l+2) &&= n^2+7n+14, \\
Tr_{\mathcal{G}^3_n}(b) &= 5n+13+\sum_{k=1}^n k      +\sum_{l=1}^n (l+1)   &&= n^2+7n+13,
\end{alignat*}
\begin{alignat*}{2}
Tr_{\mathcal{G}^3_n}(c) &= 5n+12+\sum_{k=1}^n (k+1)  +\sum_{l=1}^n l       &&= n^2+7n+12, \\
Tr_{\mathcal{G}^3_n}(x) &= 5n-2k+13+\sum_{k'=1}^n|k'-k|+\sum_{l=1}^n\min\{k+l+1,\ 2n+3-k-l\} &&=n^2+7n+13-2k, \\
Tr_{\mathcal{G}^3_n}(y) &= 5n-2l+12+\sum_{k=1}^n\min\{k+l+1,\ 2n+3-k-l\}+\sum_{l'=1}^n|l'-l| &&=n^2+7n+12-2l, \\
Tr_{\mathcal{G}^3_n}(d) &= 3n+11+\sum_{k=1}^n (n-k+1)+\sum_{l=1}^n (n-l+2) &&= n^2+5n+11, \\
Tr_{\mathcal{G}^3_n}(e) &= 3n+10+\sum_{k=1}^n (n-k+2)+\sum_{l=1}^n (n-l+1) &&= n^2+5n+10.
\end{alignat*}
Note that the values of $k$ and~$l$ range from $1$ to~$n$,
so that the value of $Tr_{\mathcal{G}^3_n}(x)$ ranges from $n^2+5n+13$ to $n^2+7n+11$,
while the value of $Tr_{\mathcal{G}^3_n}(y)$ ranges from $n^2+5n+12$ to $n^2+7n+10$,
with each transmission taking every second value in its range.
We can conclude this example with the following proposition.
\begin{proposition}
\label{pr-3}
For $n\geq 2$ the graph $\mathcal{G}^3_n$ is transmission irregular,
with its set of transmissions equal to
$$
\{n^2+5n+10, \dots, n^2+7n+16\}\cup\{n^2+8n+15\}.
$$
\end{proposition}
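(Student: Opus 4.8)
The transmission formulas for all vertices of $\mathcal{G}^3_n$ are already in hand from the computations above, so the plan is purely combinatorial: exhibit an explicit bijection between the $2n+8$ vertices of $\mathcal{G}^3_n$ (the eight core vertices together with the $2n$ internal vertices of the two chordal paths) and the claimed set of transmission values. Since such a bijection at once certifies that all transmissions are distinct and that the set of transmissions is exactly as stated, it delivers both conclusions of the proposition simultaneously. First I would note that the target set $\{n^2+5n+10,\dots,n^2+7n+16\}\cup\{n^2+8n+15\}$ consists of $2n+7$ consecutive integers plus one isolated value, hence $2n+8$ elements, matching the vertex count.

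To set this up I would normalize by the base value $N=n^2+5n+10$ and record the offset $Tr(\cdot)-N$ of each vertex. The core vertices occupy offsets $0,1$ (vertices $e,d$) and $2n+2,2n+3,2n+4,2n+5,2n+6$ (vertices $c,b,h,g,f$), while vertex $a$ has offset $3n+5$. For the path vertices, $Tr(x)-N=2n+3-2k$ ranges over the odd offsets $3,5,\dots,2n+1$ as $k=1,\dots,n$, and $Tr(y)-N=2n+2-2l$ ranges over the even offsets $2,4,\dots,2n$ as $l=1,\dots,n$. Because $N=n(n+5)+10$ is even for every $n$, these offset parities coincide with the parities of the transmissions themselves, so the $x$-transmissions and $y$-transmissions are separated by parity; moreover each is strictly monotone in its parameter and hence takes $n$ distinct values. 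Their union therefore fills precisely the offsets $2,3,\dots,2n+1$, which is exactly the gap left open between the core offsets.

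Assembling these observations, the offsets realized by the core and path vertices are $0,1,2,\dots,2n+6$, each attained exactly once, corresponding bijectively to the consecutive block $\{n^2+5n+10,\dots,n^2+7n+16\}$. The only remaining point, and the one that forces the hypothesis $n\geq 2$, is the placement of vertex $a$ at offset $3n+5$: we need $3n+5>2n+6$, i.e.\ $n>1$, so that $Tr(a)=n^2+8n+15$ lies strictly above the interval and is distinct from every other transmission, whereas at $n=1$ it would collide with $Tr(f)$. This yields that the full transmission set is exactly $\{n^2+5n+10,\dots,n^2+7n+16\}\cup\{n^2+8n+15\}$ and that all $2n+8$ values are distinct. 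The main obstacle is nothing deep but rather the careful parity-and-monotonicity accounting of the interleaving $x$- and $y$-transmissions, which is precisely what guarantees the middle offsets are covered without any coincidence.
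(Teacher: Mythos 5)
Your proposal is correct and follows essentially the same route as the paper: it takes the transmission formulas computed in Example~\ref{ex-3} as given and verifies, by tracking which values the $x$- and $y$-transmissions occupy (every second value, interleaved by parity, filling the gap between the small core values $Tr(e),Tr(d)$ and the large ones $Tr(c),\dots,Tr(f)$), that the transmissions are pairwise distinct and form exactly the claimed set. Your explicit offset bookkeeping and the identification of the $n=1$ collision $Tr(a)=Tr(f)$ (which is why $n\geq 2$ is required) merely make explicit what the paper leaves implicit in its ``every second value in its range'' remark.
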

Note that the graph $\mathcal{G}^3_2$ is also ITI,
while for $n\geq 3$ only the largest transmission $n^2+8n+15$ 
does not belong to the interval formed by the remaining transmissions.
\end{example}

\begin{example}
\label{ex-4}
Let $G_4$ be the disconnected core shown in Fig.~\ref{fig-2},
let $\mathcal{A}^4_{n,m}=\{P_1,P_2,P_3\}$ 
with $P_1=(b,d,n)$, $P_2=(c,e,n)$ and $P_3=(d,e,m)$
for some positive integers $n$ and $m$,
and let $\mathcal{G}^4_{n,m}=G_4+\mathcal{A}^4_{n,m}$.
This example generalises the previous one as $\mathcal{G}^3_n=\mathcal{G}^4_{n,2}$.

The distances in the auxiliary weighted graph $G'_4$ yield
the distances between the core vertices of $\mathcal{G}^4_{n,m}$,
which are shown in the matrix form in Fig.~\ref{fig-ex-4-core}.
Let $x$ and $x'$ be the internal vertices of the chordal path~$P_1$
at distances $p$ and~$p'$ from~$b$, respectively,
let $y$ and $y'$ be the internal vertices of the chordal path~$P_2$
at distances $q$ and~$q'$ from~$c$, respectively, and 
let $z$ and $z'$ be the internal vertices of the chordal path~$P_3$
at distances $r$ and~$r'$ from~$d$, respectively.
Eq.~(\ref{eq-from-chord-to-core}) yields the distances between the core and the chordal path vertices,
which are shown in the last three columns of Fig.~\ref{fig-ex-4-core},
while Eqs. (\ref{eq-from-chord-to-chord-different}) and~(\ref{eq-from-chord-to-chord-same}) yield
the distances between the chordal path vertices,
which are given by the following equations:
\begin{eqnarray*}
d_{\mathcal{G}^4_{n,m}}(x,x') &=& |p-p'|, \\
d_{\mathcal{G}^4_{n,m}}(x,y)  &=& \min\{p+q+1, 2n-p-q+3\}, \\
d_{\mathcal{G}^4_{n,m}}(x,z)  &=& \min\{n-p+r+1, n+m-p-r+3\}, \\
d_{\mathcal{G}^4_{n,m}}(y,y') &=& |q-q'|, \\
d_{\mathcal{G}^4_{n,m}}(y,z)  &=& \min\{n-q+r+2, n+m-q-r+2\}, \\
d_{\mathcal{G}^4_{n,m}}(z,z') &=& \min\{|r-r'|, m+r-r'+2, m-r+r'+2\}. \\
\end{eqnarray*}

\begin{figure}[ht!]
\begin{center}
$$
\begin{blockarray}{cccccccccc}
    & a       & b       & c       & d       & e       & f   
         & x             & y             & z \\
\begin{block}{c[cccccc|ccc]}
a\! & 0       & 1       & 1       & n\!+\!2 & n\!+\!2 & n\!+\!3 
         & p\!+\!1       & q\!+\!1       & \min\{n\!+\!r\!+\!2, n\!+\!m\!-\!r\!+\!3\} \\
b\! & 1       & 0       & 1       & n\!+\!1 & n\!+\!2 & n\!+\!3 
         & p             & q\!+\!1       & \min\{n\!+\!r\!+\!1, n\!+\!m\!-\!r\!+\!3\} \\
c\! & 1       & 1       & 0       & n\!+\!2 & n\!+\!1 & n\!+\!2 
         & p\!+\!1       & q             & \min\{n\!+\!r\!+\!2, n\!+\!m\!-\!r\!+\!2\} \\
d\! & n\!+\!2 & n\!+\!1 & n\!+\!2 & 0       & 1   & 2   
         & n\!-\!p\!+\!1 & n\!-\!q\!+\!2 & \min\{r,     m\!-\!r\!+\!2\}   \\
e\! & n\!+\!2 & n\!+\!2 & n\!+\!1 & 1       & 0   & 1   
         & n\!-\!p\!+\!2 & n\!-\!q\!+\!1 & \min\{r\!+\!1,   m\!-\!r\!+\!1\}   \\
f\! & n\!+\!3 & n\!+\!3 & n\!+\!2 & 2       & 1   & 0   
         & n\!-\!p\!+\!3 & n\!-\!q\!+\!2 & \min\{r\!+\!2,   m\!-\!r\!+\!2\}   \\
\end{block}
\end{blockarray}
$$
\caption{Matrix of distances from the core vertices of~$\mathcal{G}^4_{n,m}$.}
\label{fig-ex-4-core}
\end{center}
\end{figure}

Summing these expressions we obtain transmissions of the vertices of~$\mathcal{G}^4_{n,m}$:
\begin{eqnarray*}
Tr_{\mathcal{G}^4_{n,m}}(a) &=& n^2+6n+9 +\frac{m^2 + 9m    }4 + \frac12\floorfrac{m+1}2 + mn,   \\
Tr_{\mathcal{G}^4_{n,m}}(b) &=& n^2+5n+8 +\frac{m^2 + 7m - 2}4 + \frac12\floorfrac{m+2}2 +mn,    \\
Tr_{\mathcal{G}^4_{n,m}}(x) &=& n^2+5n+8 +\frac{m^2 + 7m - 2}4 + \frac12\floorfrac{m+2}2 +mn-mp, \\
Tr_{\mathcal{G}^4_{n,m}}(d) &=& n^2+5n+8 +\frac{m^2 + 3m - 2}4 + \frac12\floorfrac{m+2}2,        \\
Tr_{\mathcal{G}^4_{n,m}}(c) &=& n^2+5n+7 +\frac{m^2 + 7m - 2}4 + \frac12\floorfrac{m+2}2 +mn,    \\
Tr_{\mathcal{G}^4_{n,m}}(y) &=& n^2+5n+7 +\frac{m^2 + 7m - 2}4 + \frac12\floorfrac{m+2}2 +mn-mq, \\
Tr_{\mathcal{G}^4_{n,m}}(e) &=& n^2+5n+7 +\frac{m^2 + 3m - 2}4 + \frac12\floorfrac{m+2}2,        \\
Tr_{\mathcal{G}^4_{n,m}}(f) &=& n^2+7n+11+\frac{m^2 + 7m - 2}4 + \frac12\floorfrac{m+2}2,
\end{eqnarray*}
and 
$$
Tr_{\mathcal{G}^4_{n,m}}(z) = \left\{\begin{array}{ll}
n^2+5n+7+\left(\floorfrac m2+1\right)\left(\ceilfrac m2+1\right)+(2n+4)r, 
   & \mbox{if }r\leq\floorfrac m2, \\[2pt]
n^2+4n+5+\left(\floorfrac m2+1\right)\left(\ceilfrac m2+1\right)+(2n+4)\ceilfrac m2, 
   & \mbox{if }r=\ceilfrac m2\mbox{ and $m$ is odd}, \\[2pt]
n^2+5n+6+\left(\floorfrac m2+1\right)\left(\ceilfrac m2+1\right)+(2n+4)(m+1-r), 
   & \mbox{if }r\geq\ceilfrac m2+1.
\end{array}\right.
$$

Testing out all combinations of small values of $n$ and~$m$ ($n,m\leq 50$),
it becomes evident that for odd values of~$m\leq 50$
the graph $\mathcal{G}^4_{n,m}$ contains a pair of vertices with equal transmissions for each $n\leq 50$.
On the other hand, for even values of $m\leq 50$
the graph $\mathcal{G}^4_{n,m}$ is TI for a considerable percentage of values of $n\leq 50$.
In particular, it is not hard to prove the following proposition,
which is essentially an extension of Proposition~\ref{pr-3}.
\begin{proposition}
\label{pr-4}
If $m$ is a power of two and $m\geq 4$,
then $\mathcal{G}^4_{n,m}$ is transmission irregular
for each odd~$n$ such that $n+2\nmid m+2$.
\end{proposition}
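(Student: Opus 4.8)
The plan is to start from the closed-form transmissions derived in Example~\ref{ex-4} and specialize them to the case at hand (throughout I abbreviate $Tr_{\mathcal{G}^4_{n,m}}$ to $Tr$). Since $m$ is a power of two with $m\geq 4$ it is even, so $\lfloor(m+1)/2\rfloor=\lfloor m/2\rfloor=\lceil m/2\rceil=m/2$ and $\lfloor(m+2)/2\rfloor=m/2+1$; substituting these collapses every floor term and, in particular, eliminates the middle (odd-$m$) branch of the formula for $Tr(z)$. Writing $Q=(m^2+8m)/4$ and $R=(m^2+4m)/4$, so that $Q-R=m$, I would record the simplified values. The salient features are that $Tr(d)=Tr(e)+1$, that the $x$- and $y$-transmissions form arithmetic progressions of step $m$ anchored at $Tr(b)$ and $Tr(c)=Tr(b)-1$, and that the $z$-transmissions split into two arithmetic progressions of common difference $2n+4$: a ``$d$-side'' chain $Tr(d)+j(2n+4)$ and an ``$e$-side'' chain $Tr(e)+j(2n+4)$, $j=1,\dots,m/2$, consistent with the unimodal shape of Theorem~\ref{th-unimodal-transmissions}.

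The next step is a parity argument. Using that $n$ is odd while $m$, $Q$, $R$ and $m/2$ are all even (here $m\geq 4$ is essential, as it forces $m/2$ to be even), I would compute the parity of every transmission and observe that they split perfectly: the \emph{even} transmissions are exactly $Tr(a)$ together with the chain $\{Tr(b)-mp:\ p=0,\dots,n\}$ (which absorbs $b$ and all $x$) and the $d$-side chain $\{Tr(d)+j(2n+4):\ j=0,\dots,m/2\}$ (which absorbs $d$), while the \emph{odd} transmissions are $Tr(f)$ together with $\{Tr(c)-mq:\ q=0,\dots,n\}$ and the $e$-side chain $\{Tr(e)+j(2n+4):\ j=0,\dots,m/2\}$. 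Since no even number equals an odd number, it suffices to prove pairwise distinctness \emph{within} each parity class.

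Inside each class there are two arithmetic progressions and one extra value. Each progression is internally injective, so the task is to rule out collisions between them and with the extra value. A collision between the step-$m$ chain and the step-$(2n+4)$ chain reduces, after using $Tr(d)=Tr(b)-m(n+1)$ (resp.\ $Tr(e)=Tr(c)-m(n+1)$), to the single equation
$$m\,i=2j(n+2),\qquad i\in\{1,\dots,n+1\},\ j\in\{0,\dots,m/2\}.$$
As $n$ is odd, $n+2$ is odd and hence coprime to the power of two $m$; this forces $j\in\{0,m/2\}$ and then $i\in\{0,n+2\}$, both outside the admissible range, so no such collision occurs. For the extra values, $Tr(a)$ is the global maximum and exceeds every member of the step-$m$ chain, so only $Tr(a)$ against the $d$-side chain needs attention, whereas $Tr(f)$ must be checked against both the step-$m$ chain and the $e$-side chain.

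The crux --- and the only place where the hypothesis enters --- is the comparison of $Tr(a)$ with the $d$-side chain. Setting $Tr(a)=Tr(d)+j(2n+4)$ and simplifying yields
$$(n+2)\,(m+1-2j)=\tfrac{m+2}{2}.$$
Because $m+1-2j$ is a positive odd integer, any solution forces $n+2$ to divide $(m+2)/2$, and since $n+2$ is odd this is equivalent to $n+2\mid m+2$. The hypothesis $n+2\nmid m+2$ therefore excludes this collision outright; this is the main obstacle, as it is precisely the condition making the proposition true (for instance $n=1$, $m=4$ violates it, and indeed $Tr(a)=Tr(z)$ there). The remaining checks for $Tr(f)$ are unconditional: a collision with the step-$m$ chain would require $m\mid 2(n+2)$, impossible since $4\mid m$ while $2(n+2)\equiv 2\pmod 4$; and a collision with the $e$-side chain would require $m=2(j-1)(n+2)$, impossible since $n+2$ is an odd factor exceeding $1$ of a power of two. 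Assembling these facts shows that all $2n+m+6$ transmissions are distinct, so $\mathcal{G}^4_{n,m}$ is transmission irregular.
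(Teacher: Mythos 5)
Your proof is correct and is essentially the paper's argument carried out in full: you compare all transmission expressions pairwise and find that the only equality not excluded by parity/divisibility considerations is $Tr(a)=Tr(z)$ on the $d$-side chain, and your equation $(n+2)(m+1-2j)=\frac{m+2}{2}$ is exactly the paper's condition $2r=m+1-\frac{m+2}{2(n+2)}$, so the hypothesis $n+2\nmid m+2$ enters in precisely the same place; the split into even and odd parity classes and the three arithmetic progressions is just a systematic way of doing the case analysis that the paper compresses into ``all but one of them lead to a contradiction.'' One cosmetic slip: $Tr(a)$ is not in general the global maximum --- for $m>2(n+1)$ the top of the $d$-side chain, $Tr(d)+m(n+2)$, exceeds it --- but your argument only uses the correct weaker fact that $Tr(a)$ strictly exceeds every member of the step-$m$ chain, so nothing breaks.
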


\begin{proof}
Set equal the expressions for the transmissions of the various vertices of~$\mathcal{G}^4_{n,m}$ given above in all possible ways.
Then, all but one of them lead to a contradiction
for $m$ a power of two that is divisible by four and for odd~$n$.
The only equation among these that may have a solution is
$Tr_{\mathcal{G}^4_{n,m}}(a)=Tr_{\mathcal{G}^4_{n,m}}(z)$ for $r\leq\frac m2$ 
which is equivalent to $2r=m+1-\frac{m+2}{2(n+2)}$.
Since $\frac{m+2}2$ is odd when $m$ is a power of two,
this equation has a solution if and only if $n+2$ divides~$m+2$.
\end{proof}
\end{example}

\begin{example}
\label{ex-5}
The previous examples can be generalized even further.
Let $G_5$ be the disconnected core shown in Fig.~\ref{fig-7},
let $\mathcal{A}^5_{n,m,l}=\{P_1,P_2,P_3,P_4\}$
with $P_1=(b,d,n)$, $P_2=(c,e,n)$, $P_3=(d,f,m)$ and $P_4=(f,e,l)$,
and let $\mathcal{G}^5_{n,m,l}=G_5+\mathcal{A}^5_{n,m,l}$.
Further, let $G_{6,k}$ be the disconnected core from Fig.~\ref{fig-7} for some fixed $k\geq 1$.
For a vector of positive integers $N=(n_1,\dots,n_{k+1})$,
let $\mathcal{A}^6_{N}=\{Q_1,\dots,Q_{k+1}\}$
with $Q_i=(b_i,c_{i+1},n_i)$ for $i=1,\dots,k-1$, $Q_k=(b_k,d,n_k)$ and $Q_{k+1}=(e,c_1,n_{k+1})$.
Set $\mathcal{G}^6_{k,N}=G_{6,k}+\mathcal{A}^6_{N}$.

\begin{figure}[ht!]
\begin{center}
\includegraphics[scale=0.7]{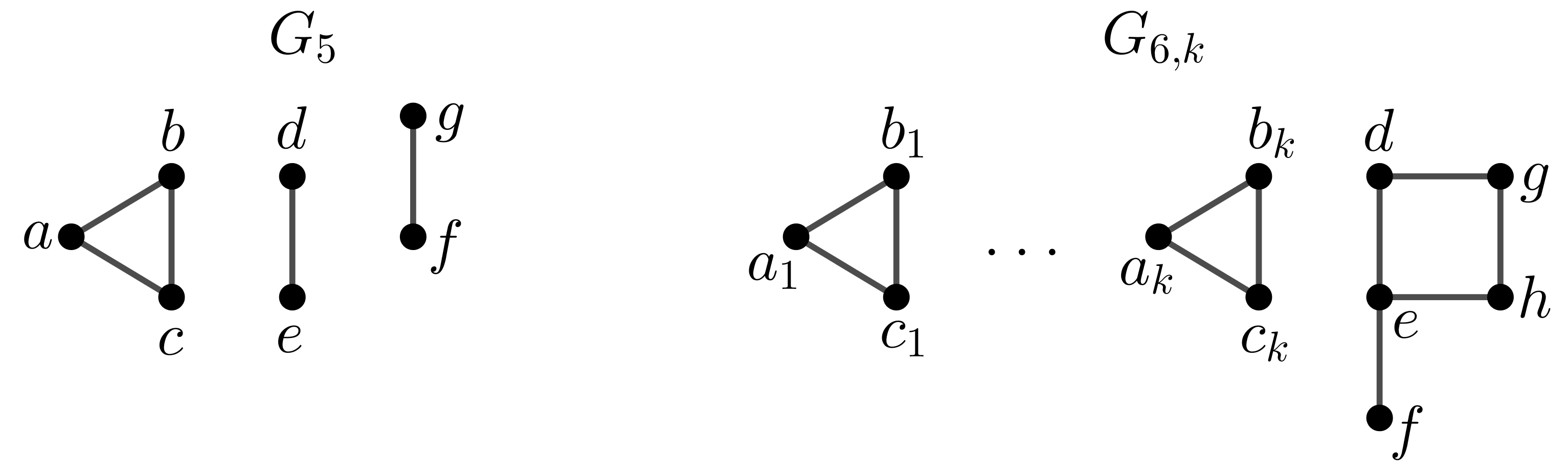}
\end{center}
\caption{The cores used in Example~\ref{ex-5}.}
\label{fig-7}
\end{figure}

In principle, one could pursue the calculation of expressions for transmissions of 
graphs $\mathcal{G}^5_{n,m,l}$ and $\mathcal{G}^6_{k,N}$
as illustrated in previous examples.
However, this is a rather tedious and time-consuming process for more complicated examples like these,
so that instead we list in Table~\ref{tb-g5-g6} small parameter values
for which graphs $\mathcal{G}^5_{n,m,l}$ and $\mathcal{G}^6_{k,N}$ are TI.

\begin{table}[ht!]
\begin{center}
\begin{tabular}{cccc@{\hskip 0.5in}ccc@{\hskip 0.5in}ccc@{\hskip 0.5in}ccc@{\hskip 0.5in}ccc}
\toprule
$\mathcal{G}^5_{n,m,l}$
   &$n$&$m$&$l$   &$n$&$m$&$l$   &$n$&$m$&$l$   &$n$&$m$&$l$   &$n$&$m$&$l$ \\
\midrule
   & 1 & 1 & 9    & 1 & 5 & 16   & 1 & 7 & 18   & 2 & 2 & 7    & 2 & 8 & 19 \\
   & 3 & 2 & 16   & 3 & 3 & 18   & 3 & 4 & 17   & 3 & 6 & 19   & 3 & 7 & 14 \\
   & 4 & 2 & 14   & 4 & 4 &  9   & 4 & 4 & 11   & 4 & 4 & 16   & 4 & 4 & 18 \\
   & 4 & 9 & 16   & 5 & 1 &  8   & 5 & 2 & 14   & 5 & 2 & 19   & 5 & 3 & 13 \\
\bottomrule
\end{tabular}

\bigskip
\begin{tabular}{ccc@{\hskip 0.5in}cc@{\hskip 0.5in}cc@{\hskip 0.5in}cc}
\toprule
$\mathcal{G}^6_{k,N}$
   & k & $N$   & k & $N$   & k & $N$   & k & $N$ \\
\midrule
   & 2 & $(1,2,1)$    & 4 & $(3,2,3,3,1)$    & 5 & $(1,13,13,1,1,1)$   & 6 & $(4, 3, 2, 4, 3, 3, 1)$ \\
   & 2 & $(18,1,1)$   & 4 & $(1,10,1,1,1)$   & 5 & $(1,13,12,3,1,2)$   & 6 & $(1, 1, 10, 6, 1, 3, 1)$ \\
   & 2 & $(25,3,3)$   & 4 & $(1,11,1,1,1)$   & 5 & $(1,13,13,1,2,2)$   & 6 & $(1, 1, 17, 1, 1, 1, 1)$ \\
   & 2 & $(31,4,4)$   & 4 & $(2,15,2,1,2)$   & 5 & $(1,13,14,1,1,2)$   & 6 & $(1, 1, 11, 6, 1, 3, 1)$ \\
\bottomrule
\end{tabular}
\end{center}
\caption{Small parameter values for which 
         graphs $\mathcal{G}^5_{n,m,l}$ and $\mathcal{G}^6_{k,N}$ are TI.}
\label{tb-g5-g6}
\end{table}
\end{example}

%


\begin{example}
\label{ex-interactive}
To ease experimentation with transmissions of cores with added chordal paths, 
we wrote a small interactive Java program that may be downloaded from
\url{zenodo.org/record/4021916}.
This download contains both the source code and the executable file {\tt archer.jar},
so that it may be run by typing {\tt java -jar archer.jar} in the terminal,
located in the folder where the file has been downloaded.
This will start an interactive program that recognises a few simple single-letter commands:
\begin{itemize}
\item {\tt g n u1 v1 u2 v2 \dots} sets up the underlying core graph.
      For example, {\tt g 4 0 1 0 2 0 3} sets the core to have 4 vertices 
      with the edges (0,1), (0,2) and (0,3).
      Vertex numbering starts at 0;
\item {\tt g6 code} set the core through its graph6 code. 
      These codes are shortened versions of the adjacency matrix
      used by Brendan McKay's package nauty \cite{nauty};
\item {\tt a u v s} adds a new chordal path between the vertices $u$ and~$v$ with $s$ internal vertices.
\item {\tt d index} deletes the existing chordal path with given index
      (chordal path numbering also starts at~0);
\item {\tt c} clears all existing chordal paths at once;
\item {\tt x} exits the interactive program. 
\end{itemize}

The program recalculates and prints out vertex transmissions after each command, 
which makes it possible to observe their changes after additions of chordal paths.
Vertex transmissions are printed separately for each core vertex and 
along the internal vertices of each chordal path 
(counting from the chordal path end vertex that was listed first in its definition).
Vertex transmissions are then collected, sorted and printed out again in the last line
as the union of intervals and repetitions, 
so that it is easy to recognise an interval transmission integral graph, 
since the collection of the transmissions will be printed out as a single interval.
For example, 
the following commands recreate the fourth graph from the second row of Figure~\ref{fig-small-ITI}:
\begin{verbatim}
>>g 4 0 1 0 2 0 3 2 3
>>a 0 1 2
>>a 1 2 1
>>a 2 3 2 
\end{verbatim}
Program output after the last command is:
\begin{verbatim}
Vertex 0: 12
Vertex 1: 15
Vertex 2: 13
Vertex 3: 14
Arc 0 (0 1 2): 17 20
Arc 1 (1 2 1): 16
Arc 2 (2 3 2): 18 19
[12--20]
\end{verbatim}
showing that this graph is indeed ITI. 
Note that this simple interactive program is designed for quick experiments.
Hence, no syntax error checking has been implemented and 
any command that contains a typo will either be ignored (in the better case) 
or confuse the program to exit immediately (in the worse case).
\end{example}

\subsection{Computational search for ITI graphs}

Besides obtaining particular instances of ITI graphs in the previous examples,
we also ran two exhaustive computational searches
motivated by the fact that by-now-famous fourth graph from left of the second row of Fig.~\ref{fig-small-ITI} 
may also be understood as a Hamiltonian graph obtained from a cycle by adding to it a few chords.
In the first computational search we looked for ITI graphs
by adding, in all possible ways, a number of chords to a cycle of a given length.
With the exception of the cycle having 12 vertices 
for which addition of 4~particular chords leads to an ITI graph,
the addition of $(n-3)/2$ chords to the cycle with an odd number $n\geq 9$ of vertices
yielded ITI graphs in all remaining cases.
The number of ITI graphs found in this way is shown in Table~\ref{tb-cycle-with-chords},
while the drawings of such graphs with up to 15~vertices are shown in Fig.~\ref{fig-cycle-with-chords}.
Due to combinatorial explosion,
we were not able to complete the search for ITI graphs on 17~vertices with seven added chords,
so that the number of such ITI graphs is likely to be greater than~56.

\begin{table}[ht!]
\begin{center}

}

15 vertices 
\end{center}
\end{minipage}
\end{center}
\caption{Drawings of ITI graphs with up to 15 vertices obtained by adding chords to a cycle.}
\label{fig-cycle-with-chords}
\end{figure}

We were able to find many more new examples of ITI graphs by resorting to 
a recent efficient generator of graphs with few Hamiltonian cycles,
written by Goedgebeur, Meersman and Zamfirescu~\cite{gomz}.
The use of the generator in~\cite{gomz} enabled us to exhaustively enumerate
32 ITI graphs on 15 vertices and 595 ITI graphs on 16 vertices
among graphs with a unique Hamiltonian cycle.
Among such graphs, we have further found 
87 ITI graphs on 17 vertices, 
20 ITI graphs on 19 vertices and 
5 ITI graphs on 20 vertices,
although the last three counts are not complete due to combinatorial explosion.
After examining the ITI graphs obtained in this way,
we have found those that, after deleting edges of the unique Hamiltonian cycles,
consist of (unions of) trees, (unions of) starlike trees and even (unions of) paths,
with particular instances shown in Fig.~\ref{fig-unique-Hamiltonian}.

Nevertheless, 
we were not able to observe any discernible pattern among all these examples of ITI graphs.

\begin{figure}[ht!]
\begin{center}
\includegraphics[height=3in]{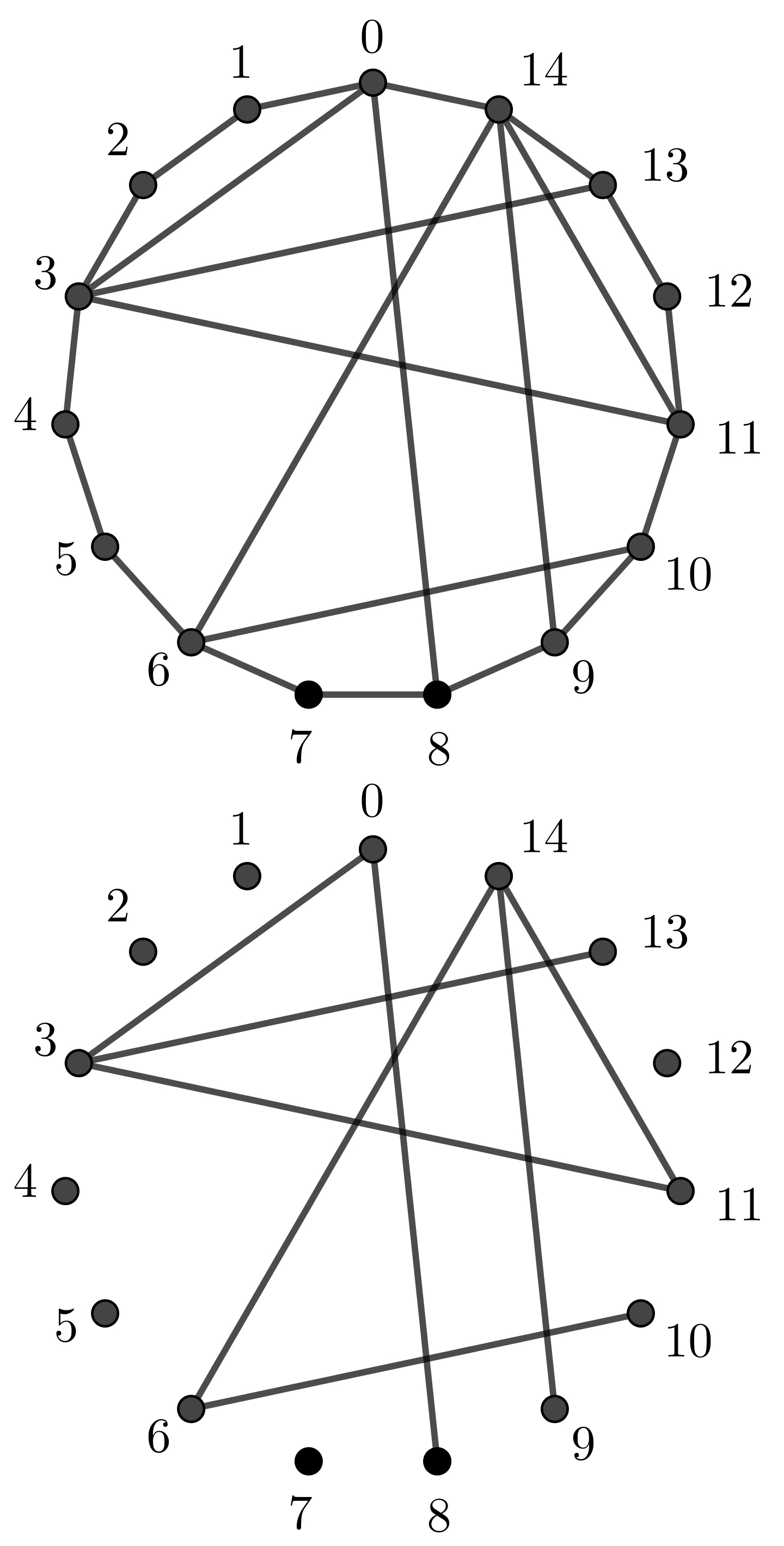}
\qquad
\includegraphics[height=3in]{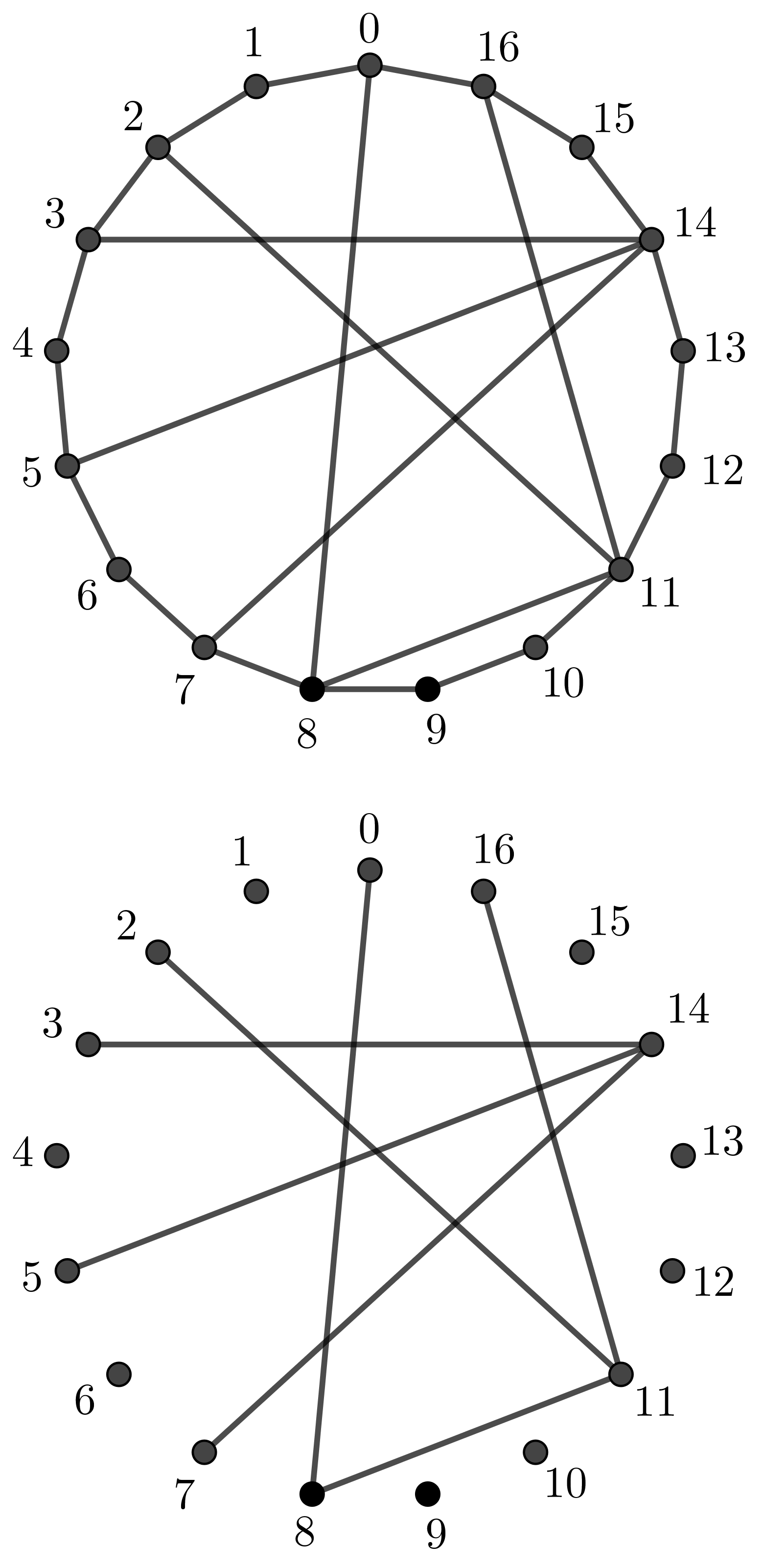}
\qquad
\includegraphics[height=3in]{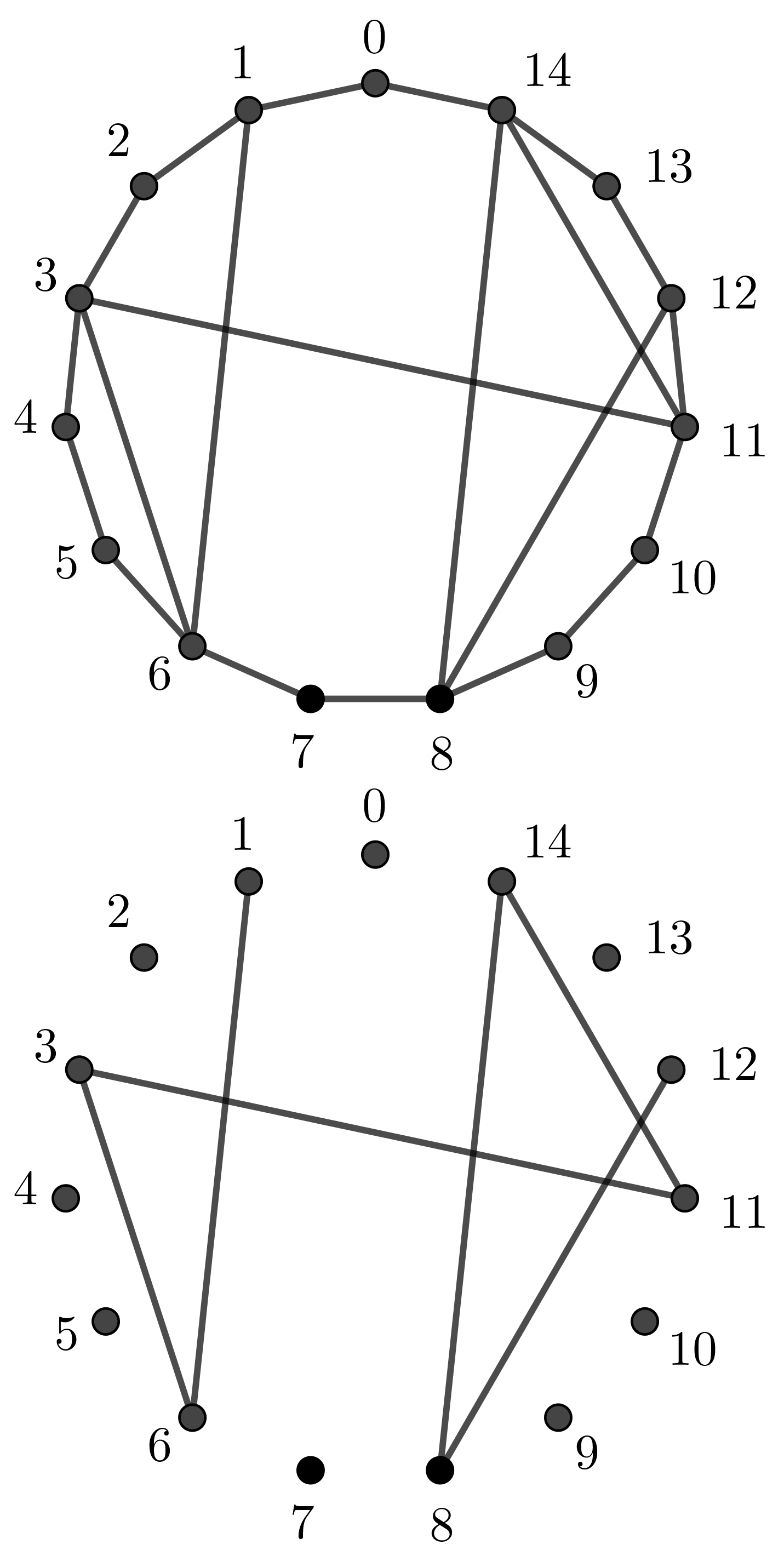}
\end{center}
\caption{Particular instances of ITI graphs with a unique Hamiltonian cycle.
Bottom drawings represent graphs after removing edges of the unique Hamiltonian cycle.}
\label{fig-unique-Hamiltonian}
\end{figure}

\subsection{Unimodality of transmissions along internal paths}

The sequence $s_1,\dots,s_k$ is {\em unimodal}
if there exists~$t$ (which can be equal to 1 or~$k$ as well) such that 
$$
s_1\leq\dots\leq s_{t-1}\leq s_t\geq s_{t+1}\geq\dots\geq s_k,
$$
while this sequence is {\em inversely unimodal} if the sequence $-s_1,\dots,-s_k$ is unimodal,
i.e., if there exists~$t$ such that
$$
s_1\geq\dots\geq s_{t-1}\geq s_t\leq s_{t+1}\leq\dots\leq s_k.
$$
During our empirical studies with the interactive program from Example~\ref{ex-interactive},
we noticed that transmissions along chordal paths are either unimodal or inversely unimodal.
Because this property holds in general, 
and not only for chordal paths attached to cores,
we state it in terms of internal paths.
Recall that an internal path in a graph~$G$ is any sequence $w_0,\dots,w_k$ of its vertices
such that $(w_i,w_{i+1})\in E_G$ for $0\leq i\leq k-1$ and 
each vertex $w_1,\dots,w_{k-1}$ has degree two.

\begin{theorem}
\label{th-unimodal-transmissions}
Let $G$ be a connected graph, $u,v\in V_G$ and 
let $u=w_0,\dots,w_k=v$ be an internal path between $u$ and~$v$ for some $k\geq 2$.
Let $H=G-w_1-\dots-w_{k-1}$ be the graph 
obtained by deleting the vertices $w_1,\dots,w_{k-1}$ from~$G$.
Then the sequence of transmissions $Tr_G(w_0), \dots, Tr_G(w_k)$ is unimodal if $H$~is connected,
and inversely unimodal if $H$~is disconnected.
\end{theorem}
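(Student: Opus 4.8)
The plan is to reduce the statement to a convexity property of the sequence $f(i):=Tr_G(w_i)$, $i=0,\dots,k$: a concave sequence is unimodal and a convex sequence is inversely unimodal, so it suffices to prove that $f$ is concave when $H$ is connected and convex when $H$ is disconnected. For the disconnected case I would argue directly through the first difference $\delta(i)=f(i+1)-f(i)$, which, since $w_i,w_{i+1}$ are adjacent, equals the number of vertices closer to $w_i$ minus the number closer to $w_{i+1}$. Because each internal vertex has degree two, deleting $w_1,\dots,w_{k-1}$ can only merge the components of $u$ and $v$; hence a disconnected $H$ has exactly two components $C_u\ni u$, $C_v\ni v$, and every path edge $(w_i,w_{i+1})$ is a bridge of $G$. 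Its two sides have sizes $|C_u|+i$ and $|C_v|+k-1-i$, so $\delta(i)=(|C_u|+i)-(|C_v|+k-1-i)$ increases by exactly $2$ at each step; thus $\delta$ is strictly increasing, $f$ is convex, and the sequence is inversely unimodal.

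The connected case is the heart of the matter. Form the auxiliary weighted graph $G'$ by deleting $w_1,\dots,w_{k-1}$ and joining $u,v$ by an edge of weight $k$, so that $d_{G'}$ agrees with $d_G$ on $V_H$; set $L=d_{G'}(u,v)=\min\{k,d_H(u,v)\}$ and $C=k+L$. Equations~(\ref{eq-from-chord-to-core}) and~(\ref{eq-from-chord-to-chord-same}) then read $d_G(w_i,z)=\min\{i+p(z),\,(k-i)+q(z)\}$ for a core vertex $z$, where $p(z)=d_{G'}(u,z)$ and $q(z)=d_{G'}(v,z)$, and $d_G(w_i,w_j)=\min\{|i-j|,\,C-|i-j|\}=:d_C(i,j)$, the distance on a cycle of length $C$ carrying $w_i$ at position $i$. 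The key observation is that every core vertex admits the form $d_G(w_i,z)=c_z+d_C(i,\theta(z))$ with $c_z=\tfrac12(p(z)+q(z)-L)\ge 0$ and $\theta(z)$ a point of the external arc $[k,\,k+L]$; such a tent $d_C(i,\theta(z))=\min\{\theta(z)-i,\ i+C-\theta(z)\}$ is concave in $i$ on $\{0,\dots,k\}$, since its only convex kink sits at $i=\theta(z)\ge k$. So the core vertices contribute a concave function, and the entire difficulty is isolated in the internal term $\sum_{j=1}^{k-1}d_C(i,j)$, which is convex.

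To absorb this convex term I would exploit that on a cycle of length $C$ the full sum $\sum_{q=0}^{C-1}d_C(i,q)$ is independent of $i$. Rewriting $\sum_{j=1}^{k-1}d_C(i,j)$ as this constant minus $d_C(i,0)+d_C(i,k)+\sum_{q=k+1}^{k+L-1}d_C(i,q)$ converts the internal contribution into a constant plus a \emph{nonpositive} combination of concave tents anchored at the external positions $0,k,k+1,\dots,k+L-1$. Since $H$ is connected it contains a shortest $u$–$v$ path, and I would verify that the images under $\theta$ of its vertices (together with $u$ and $v$) are exactly these external positions, so that the positive tents $+d_C(i,\theta(z))$ carried by those particular core vertices cancel the negative ones term by term. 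After the cancellation only a nonnegative combination of concave tents survives, whence $f$ is concave on $\{0,\dots,k\}$ and therefore unimodal.

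The step I expect to be the main obstacle is precisely this last bookkeeping: matching the shortest-path core vertices to the external integer positions. It is transparent when $d_H(u,v)\le k$, for then $L=d_H(u,v)$ and the internal vertices $e_1,\dots,e_{L-1}$ of the shortest path land at positions $k+1,\dots,k+L-1$. When $d_H(u,v)>k$, however, one has $L=k$ and $C=2k$, and one must follow which vertices of the long external path realise their distances through the weighted edge of $G'$; a short parity discussion of $d_H(u,v)-k$ is then needed to pin the positions down exactly. Equivalently, the crux is to show that at each interior $i$ the number of core vertices whose nearer endpoint switches at step $i$ is at least the convex surplus $\Delta^2\!\big(\sum_{j=1}^{k-1} d_C(i,j)\big)$, and establishing this cleanly is where the real work lies.
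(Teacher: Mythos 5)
Your disconnected case is sound and is essentially the paper's argument (bridge edges, linear distances, a convex quadratic). Your connected case also pursues the same goal as the paper---concavity of $i\mapsto Tr_G(w_i)$, with the internal-path sum $\sum_{j=1}^{k-1}d_C(i,j)$ isolated as the only convex contribution and neutralized against the vertices of a shortest $u$--$v$ path in $H$---but the execution has a genuine gap exactly where you flag it, and flagging it is not closing it. Working in the weighted graph $G'$ forces your cycle length to be $k+L$ with $L=\min\{k,d_H(u,v)\}$. When $m:=d_H(u,v)>k$ you have $L=k$, the negative tents sit at the integer positions $k+1,\dots,2k-1$, while the positive tents carried by the middle shortest-path vertices sit at $\theta(y_j)=k+\tfrac{m+k}{2}-j$. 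If $m+k$ is odd, these are \emph{half-integer} positions, so the ``term by term'' cancellation you invoke is literally impossible: no positive tent coincides with any negative one. The case can still be rescued, but only by an aggregate second-difference count (each half-integer tent contributes $-1$ to the second difference at two consecutive interior points, and the resulting $-2$ per interior point offsets the $+2$ coming from each negative integer tent); nothing of that kind appears in your proposal, and your closing paragraph concedes that this ``is where the real work lies.'' As submitted, the theorem is unproved precisely in the case $d_H(u,v)>k$.

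You should also see how the paper makes this difficulty evaporate. It does not truncate: it takes the cycle $C$ induced by the internal path together with a genuine shortest path $u,y_1,\dots,y_{m-1},v$ in $H$, of length $k+m$ in all cases. One checks that distances in $G$ from $w_i$ to every vertex of this cycle coincide with distances in the cycle $C_{k+m}$, so $\sum_{z\in C}d_G(w_i,z)=\left\lfloor\frac{k+m}{2}\right\rfloor\left\lceil\frac{k+m}{2}\right\rceil$ is constant in $i$; and for every vertex $z$ off the cycle, the degree-two property of $w_i$ gives at once $2d_G(w_i,z)\ge d_G(w_{i-1},z)+d_G(w_{i+1},z)$, because the shortest walk from $w_i$ must leave through $w_{i-1}$ or $w_{i+1}$. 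Summing yields concavity with no auxiliary weighted graph, no tent bookkeeping, and no parity cases. If you prefer to keep your framework, the same repair works inside it: use the untruncated cycle of length $k+m$ instead of $k+L$. Then $u$, $v$ and the $y_j$ land exactly at the integer positions $0,k,k+1,\dots,k+m-1$ with zero offsets, your term-by-term cancellation is restored in every case, and each remaining core vertex is concave for the trivial reason that $\min\{i+d_G(u,z),\,k-i+d_G(v,z)\}$ is a minimum of two linear functions of $i$.
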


\begin{figure}[ht!]
\begin{center}
\includegraphics[width=0.5\textwidth]{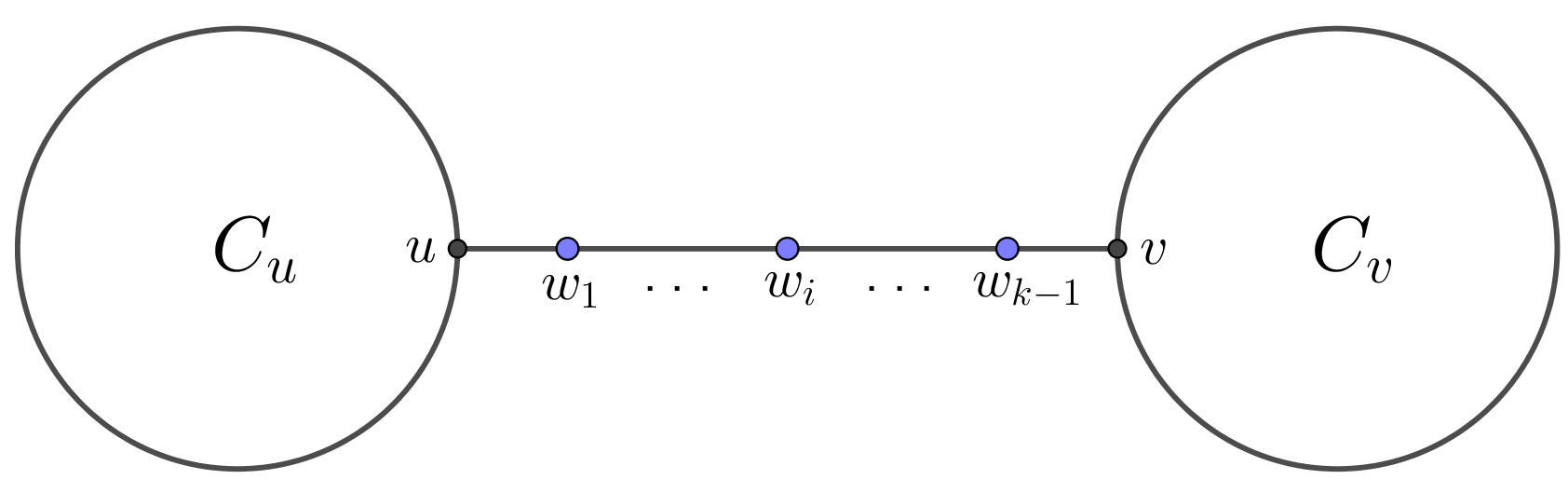}
\end{center}
\caption{The case of disconnected $H=G-w_1-\dots-w_{k-1}$.}
\label{fig-unimod-1}
\end{figure}

\begin{proof}
Suppose first that $H$ is disconnected,
so that $u$ and $v$ necessarily belong to different components of~$H$.
The edges $w_0w_1$, \dots, $w_{k-1}w_k$ are then bridges in~$G$,
and $H$ consists of two components: $C_u$ that contains~$u$ and $C_v$ that contains~$v$
(see Fig.~\ref{fig-unimod-1}).
For $0\leq i\leq k$ we have:
\begin{eqnarray*}
(\forall z\in C_u)\quad d_G(w_i,z) &=& i + d_H(u,z), \\
(\forall z\in C_v)\quad d_G(w_i,z) &=& k-i + d_H(v,z), \\
(\forall j, 0<j<k)\ d_G(w_i,w_j)&=&|i-j|. \\
\end{eqnarray*}
Hence
\begin{eqnarray*}
Tr_G(w_i) &=& i|C_u| + Tr_H(u) + (k-i)|C_v| + Tr_H(v) + \sum_{j=1}^{k-1}|i-j| \\
          &=& i^2 + i\left(|C_u|-|C_v|-k\right) + \left(Tr_H(u)+Tr_H(v)+k|C_v|+\frac{k(k-1)}2\right).
\end{eqnarray*}          
As a result, $Tr_G(w_i)$ is a quadratic function in~$i$ with a positive coefficient of $i^2$ 
and the $x$-coordinate of its vertex at $x=\frac{k+|C_v|-|C_u|}2$.
Then $Tr_G(w_{i-1})\geq Tr_G(w_i)$ for $i\leq x$
and $Tr_G(w_i)\leq Tr_G(w_{i+1})$ for $x\leq i$,
so that the sequence $Tr_G(w_0),\dots,Tr_G(w_k)$ is inversely unimodal
with the minimum element equal to the smaller one
between $Tr_G(w_{\lfloor x\rfloor})$ and $Tr_G(w_{\lceil x\rceil})$.

\begin{figure}[ht!]
\begin{center}
\includegraphics[width=0.4\textwidth]{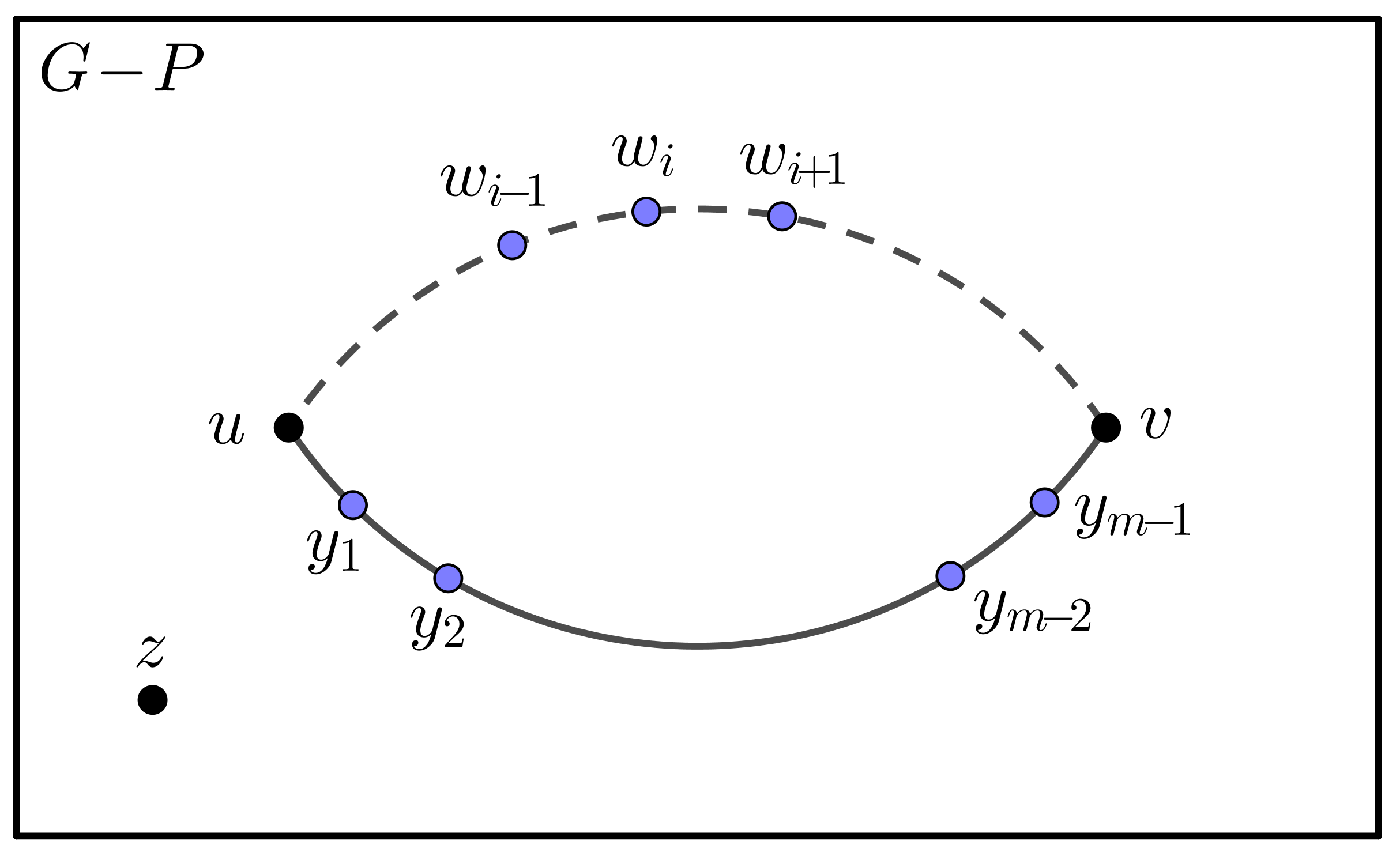}
\end{center}
\caption{The case of connected $H=G-w_1-\dots-w_{k-1}$.}
\label{fig-unimod-2}
\end{figure}

Next, suppose that $H$ is connected and 
let $u,y_1,\dots,y_{m-1},v$ be the shortest walk between $u$ and~$v$ in~$H$
(see Fig.~\ref{fig-unimod-2}).
As $w_1,\dots,w_{k-1}$ do not belong to~$H$,
the set of vertices 
$$
C=\{u,w_1,\dots,w_{k-1},v,y_{m-1},\dots,y_1\}
$$ 
induces a chordless cycle in~$G$.
Hence for each $0\leq i\leq k$ 
the sum $\sum_{z\in C} d_G(w_i,z)$ is equal to 
the transmission of any vertex of the cycle on $k+m$ vertices,
i.e.,
\begin{equation}
\label{eq-sum-C}
\sum_{z\in C} d_G(w_i,z) = \floorfrac{k+m}2\ceilfrac{k+m}2.
\end{equation}

For any other vertex $z\notin C$ and any $0<i<k$,
the shortest walk from $w_i$ to~$z$ in~$G$ goes either through $w_{i-1}$ or $w_{i+1}$.
In the former case,
$$
d_G(w_i,z) = d_G(w_{i-1},z) + 1
\quad\mbox{and}\quad
d_G(w_{i+1},z) \leq d_G(w_i,z) + 1,
$$
while in the latter case,
$$
d_G(w_i,z) = d_G(w_{i+1},z) + 1
\quad\mbox{and}\quad
d_G(w_{i-1},z) \leq d_G(w_i,z) + 1.
$$
In either case, we have
$$
2d_G(w_i,z) \geq d_G(w_{i-1},z) + d_G(w_{i+1},z).
$$
Summing this inequality over all $z\notin C$ 
and taking into account Eq.~(\ref{eq-sum-C}),
we obtain
$$
2Tr_G(w_i) \geq Tr_G(w_{i-1}) + Tr_G(w_{i+1}).
$$
This implies that the sequence of differences given by 
$$
Tr_G(w_1)-Tr_G(w_0),\quad Tr_G(w_2)-Tr_G(w_1),\quad\dots,\quad Tr_G(w_k)-Tr_G(w_{k-1})
$$
is nonincreasing,
so that the sequence of transmissions $Tr_G(w_0), Tr_G(w_1), \dots, Tr_G(w_k)$ is unimodal,
with the index of its maximum element equal to
the largest~$i$ for which the difference $Tr_G(w_i)-Tr_G(w_{i-1})$ is nonnegative.
\end{proof}

The only related result that we could find in the literature is~\cite[Theorem 3.3]{enjs},
which states that transmissions in a tree~$T$ increase along any path 
that starts from the vertex of minimum transmission in~$T$.

\section{Cartesian product and modulo transmission irregular graphs}
\label{sc-cartesian-product}

Vertex transmissions in Cartesian product of graphs can be expressed 
in terms of vertex transmissions in its factors.
Recall that the {\em Cartesian product} $G\square H$ of two graphs $G=(V_G,E_G)$ and $H=(V_H,E_H)$ is
a graph with the vertex set $V_G\times V_H$ in which
two vertices $(u_G, u_H)$ and $(v_G, v_H)$ are adjacent 
if either $(u_G=v_G$ and $(u_H, v_H)\in E_H)$ or $((u_G,v_G)\in E_G$ and $u_H=v_H)$.
As each edge in a walk between two vertices in~$G\square H$
makes a step in one of the coordinates and keeps the other coordinates fixed,
it is apparent that
$$
d_{G\square H}((u_G,u_H), (v_G,v_H)) = d_G(u_G,v_G) + d_H(u_H,v_H).
$$
Summing over all vertices $(v_G,v_H)$ in $G\square H$, we obtain
\begin{eqnarray}
\nonumber
Tr_{G\square H}(u_G,u_H) 
  &=& \sum_{(v_G,v_H)\in V_{G\square H}} d_{G\square H}((u_G,u_H),(v_G,v_H)) \\
\nonumber  
  &=& \sum_{v_G\in V_G}\sum_{v_H\in V_H} d_G(u_G,v_G) + d_H(u_H, v_H) \\
\nonumber  
  &=& \sum_{v_G\in V_G} \left(|V_H|d_G(u_G,v_G) + Tr_H(u_H)\right) \\
\label{eq-cartesian-transmission}  
  &=& |V_H|Tr_G(u_G) + |V_G|Tr_H(u_H).
\end{eqnarray}
If we now assume that two vertices in the Cartesian product have equal transmissions,
$$
Tr_{G\square H}(u_G,u_H) = Tr_{G\square H}(v_G,v_H),
$$
then Eq.~(\ref{eq-cartesian-transmission}) implies
$$
|V_H|\left(Tr_G(u_G) - Tr_G(v_G)\right) = |V_G|\left(Tr_H(v_H) - Tr_H(u_H)\right).
$$
If $|V_G|$ and $|V_H|$ are further assumed to be relatively prime,
then this implies
$$
Tr_G(u_G) \equiv Tr_G(v_G) \pmod{|V_G|}
\quad\mbox{and}\quad
Tr_H(u_H) \equiv Tr_H(v_H) \pmod{|V_H|}.
$$
This warrants the introduction of the following definition.
\begin{definition}
A connected graph $G$ is {\em modulo transmission irregular} (MTI)
if no two vertices of~$G$ have transmissions congruent modulo the number of vertices of~$G$.
\end{definition}
The above argument then gives rise to the following theorem.
\begin{theorem}
\label{th-product-with-mti}
Let $G$ be a modulo transmission irregular graph,
and let $H$ be a connected graph.
If $|V_G|$ and $|V_H|$ are relatively prime, then:

i) if $H$ is transmission irregular,
   then $G\square H$ is also transmission irregular;

ii) if $H$ is modulo transmission irregular,
    then $G\square H$ is also modulo transmission irregular.   
\end{theorem}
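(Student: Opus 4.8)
The plan is to build directly on the transmission formula~(\ref{eq-cartesian-transmission}) and on the reduction already carried out in the paragraph preceding the theorem, exploiting the coprimality of $|V_G|$ and $|V_H|$ to decouple the two coordinates modulo their respective orders. In both parts the strategy is identical: I would force the equality (resp.\ congruence) of two transmissions in the product to imply a congruence \emph{separately} in each factor, and then invoke the modulo-irregularity of $G$ and the appropriate irregularity of $H$ to conclude that the two vertices coincide coordinatewise.

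For part~(i), I would suppose $Tr_{G\square H}(u_G,u_H)=Tr_{G\square H}(v_G,v_H)$. The computation preceding the theorem already yields $Tr_G(u_G)\equiv Tr_G(v_G)\pmod{|V_G|}$, and since $G$ is modulo transmission irregular this forces $u_G=v_G$. Substituting $u_G=v_G$ back into the identity $|V_H|(Tr_G(u_G)-Tr_G(v_G))=|V_G|(Tr_H(v_H)-Tr_H(u_H))$ collapses its left-hand side to zero, giving $Tr_H(u_H)=Tr_H(v_H)$; as $H$ is transmission irregular, $u_H=v_H$. Thus $(u_G,u_H)=(v_G,v_H)$, and $G\square H$ is transmission irregular.

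For part~(ii), the product has $|V_G|\,|V_H|$ vertices, so I would start from the assumption $Tr_{G\square H}(u_G,u_H)\equiv Tr_{G\square H}(v_G,v_H)\pmod{|V_G|\,|V_H|}$. Expanding via~(\ref{eq-cartesian-transmission}) gives $|V_H|(Tr_G(u_G)-Tr_G(v_G))+|V_G|(Tr_H(u_H)-Tr_H(v_H))\equiv0\pmod{|V_G|\,|V_H|}$. Reducing this congruence modulo~$|V_G|$ annihilates the second summand and leaves $|V_H|(Tr_G(u_G)-Tr_G(v_G))\equiv0\pmod{|V_G|}$; since $\gcd(|V_G|,|V_H|)=1$, the factor $|V_H|$ is invertible modulo $|V_G|$ and may be cancelled, yielding $Tr_G(u_G)\equiv Tr_G(v_G)\pmod{|V_G|}$, whence $u_G=v_G$ because $G$ is MTI. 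The symmetric reduction modulo~$|V_H|$ gives $Tr_H(u_H)\equiv Tr_H(v_H)\pmod{|V_H|}$, whence $u_H=v_H$ because $H$ is MTI. Again $(u_G,u_H)=(v_G,v_H)$, so $G\square H$ is modulo transmission irregular.

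I expect no serious obstacle, since the heavy lifting is already performed by formula~(\ref{eq-cartesian-transmission}); the only point requiring care is the separation of the single congruence modulo $|V_G|\,|V_H|$ into two independent congruences. This rests on the invertibility of $|V_H|$ modulo $|V_G|$ (and of $|V_G|$ modulo $|V_H|$) guaranteed by coprimality, and on the elementary observation that reducing a congruence modulo $|V_G|\,|V_H|$ to one modulo $|V_G|$ is legitimate precisely because $|V_G|$ divides $|V_G|\,|V_H|$, so that no appeal to the Chinese Remainder Theorem is actually needed.
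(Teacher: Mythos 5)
Your proposal is correct and follows essentially the same route as the paper: both rest on Eq.~(\ref{eq-cartesian-transmission}), split the resulting equality (resp.\ congruence modulo $|V_G||V_H|$) into separate congruences modulo $|V_G|$ and $|V_H|$ via coprimality, and then apply the MTI property of $G$ (plus, in part~(i), the collapse to an exact equality $Tr_H(u_H)=Tr_H(v_H)$ once $u_G=v_G$). You merely spell out explicitly the cancellation of the invertible factor $|V_H|$ modulo $|V_G|$, which the paper leaves implicit in the phrase ``since $|V_G|$ and $|V_H|$ are relatively prime.''
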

  
\begin{proof}
We will prove part ii) only. The part i) follows analogously. 
Assume therefore that 
$$
Tr_{G\square H}(u_G,u_H) \equiv Tr_{G\square H}(v_G,v_H) \pmod{|V_{G\square H}|}.
$$
From Eq.~(\ref{eq-cartesian-transmission}), we obtain
$$
|V_G||V_H|\,\Big|\,|V_H|\left(Tr_G(u_G) - Tr_G(v_G)\right) + |V_G|\left(Tr_H(v_H) - Tr_H(u_H)\right),
$$
and since $|V_G|$ and $|V_H|$ are relatively prime, we have
$$
Tr_G(u_G) \equiv Tr_G(v_G) \pmod{|V_G|}
\quad\mbox{and}\quad
Tr_H(u_H) \equiv Tr_H(v_H) \pmod{|V_H|}.
$$
Finally, since both $G$ and $H$ are MTI,
this implies that $u_G=v_G$ and $u_H=v_H$,
so that $G\square H$ is also MTI.
\end{proof}  

MTI graphs are situated between TI and ITI graphs:
each ITI graph is also MTI, while each MTI graph is also TI.
On up to ten vertices, each MTI graph is also ITI,
while the complete enumeration of connected graphs on 11~vertices shows that
there are 1072 ITI graphs and 1293 MTI graphs.
Fig.~\ref{fig-MTI-not-ITI} shows one of the examples of MTI graph that is not ITI,
let us denote it by~$M_{11}$.

\begin{figure}[ht!]
\begin{center}
\includegraphics[width=1.25in]{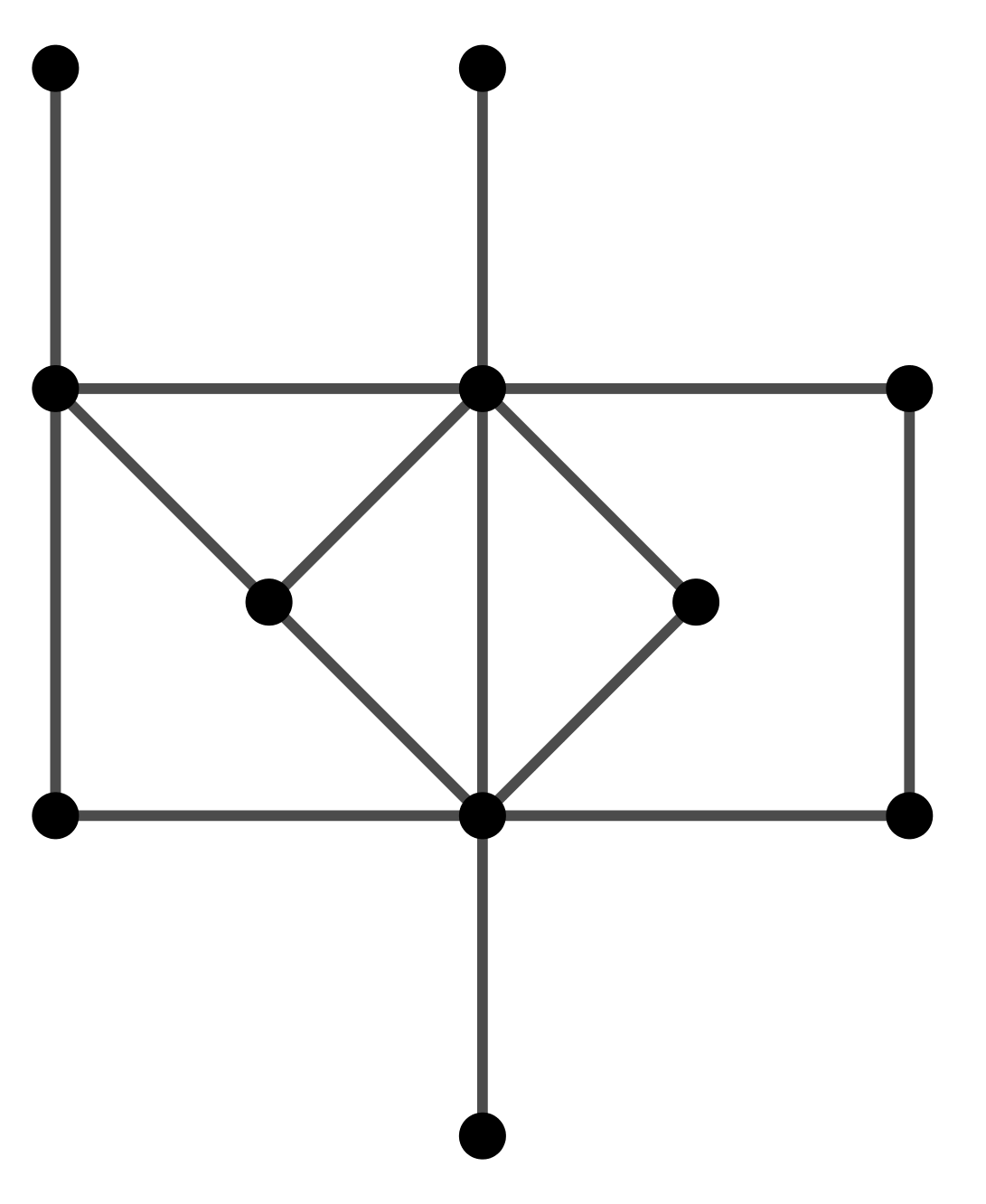}
\end{center}
\caption{An 11-vertex MTI graph that is not ITI. 
The set of its vertex transmissions is $\{14,15,17,\dots,24,27\}$.}
\label{fig-MTI-not-ITI}
\end{figure}

Theorem~\ref{th-product-with-mti} now makes it possible 
to obtain new families of TI graphs from the existing ones.
For example:
\begin{itemize}
\item the graph $\mathcal{G}^1_{6m+1}$ is TI for $m\geq 1$ by Proposition~\ref{pr-1} and 
      it has $6(m+1)$ vertices. Hence if $11\nmid m+1$, then
      $M_{11} \square \mathcal{G}^1_{6m+1}$ is also TI graph with $66(m+1)$ vertices;

\item the graph $\mathcal{G}^2_{4m+1}$ is TI for $m\geq 1$ by Proposition~\ref{pr-2} and
      it has $4m+9$ vertices. Hence if $11\nmid m+5$, then
      $M_{11} \square \mathcal{G}^2_{4m+1}$ is also TI graph with $11(4m+9)$ vertices;

\item the graph $\mathcal{G}^3_n$ is TI for $n\geq 2$ by Proposition~\ref{pr-3} and
      it has $2n+8$ vertices. Hence if $11\nmid n+4$, then
      $M_{11} \square \mathcal{G}^3_n$ is also TI graph with $22(n+4)$ vertices.
\end{itemize}      

Note, however, that Theorem~\ref{th-product-with-mti} cannot be used to produce 
an infinite family of MTI graphs from a finite collection of existing MTI graphs,
due to the request for relatively prime numbers of vertices among factor graphs.

\section{An infinite family of ITI graphs}
\label{sc-infinite-iti}

After spending considerable efforts on trying 
to construct an infinite family of ITI graphs through addition of chordal paths
motivated by the apparent abundance of Hamiltonian ITI graphs,
we eventually had to look for another approach to positively answer Dobrynin's question.
Because the number of ITI graphs on up to ten vertices is rather small
(all 16 of them are shown in Fig.~\ref{fig-small-ITI}) 
and since there are no ITI graphs on ten vertices,
we proceeded to enumerate ITI graphs among connected graphs on 11 vertices.
The unexpectedly large number of 1072 such graphs compelled us to calculate some of pertinent statistics first.
It turned out that a great majority of them (over 900 graphs) has diameter three only.
Most of these diameter three ITI graphs tend to have 
two adjacent vertices $a$ and~$b$ of large degrees which differ by one.
To further simplify the structure of considered ITI instances,
we required that each remaining vertex should be adjacent to at least one of $a$ and~$b$,
which resulted in 151 such ITI graphs on 11 vertices.
When we further classified these graphs 
according to the numbers of pendent vertices adjacent to $a$ or~$b$,
it turned out that in 114 of them,
each of $a$ and~$b$ is adjacent to a single pendent vertex.
It is only then that we observed 
that a good number of small ITI graphs from Fig.~\ref{fig-small-ITI} also has this structure,
including the smallest ITI graph on 7 vertices.

\begin{figure}[ht!]
\begin{center}
\includegraphics[height=1.6in]{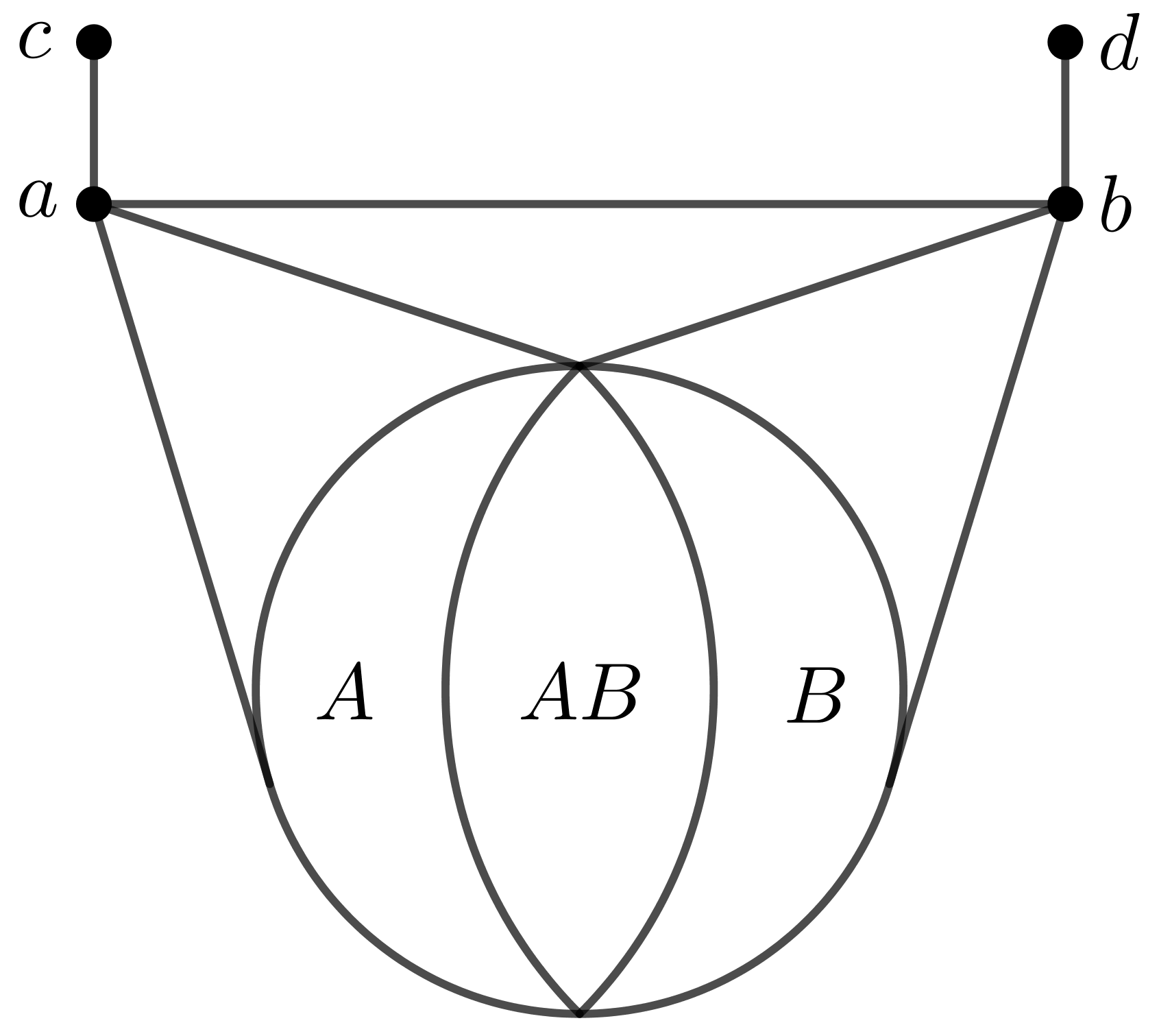}
\end{center}
\caption{The general structure of many instances of ITI graphs.}
\label{fig-ITI-structure}
\end{figure}

Fig.~\ref{fig-ITI-structure} illustrates this observed structure of ITI instances.
Here $A$ denotes the set of vertices adjacent to~$a$ only (excluding the pendent vertex~$c$),
$B$ denotes the set of vertices adjacent to $b$ only (excluding $d$),
while $AB$ is the set of vertices adjacent to both $a$ and~$b$.
In the final selection of ITI instances on 11 vertices,
the set~$AB$ consisted of exactly two vertices,
while $|A|=|B|+1$, 
due to the requirement that the degree of~$a$ is one larger than the degree of~$b$.

To make sure that this setup would lead to larger examples of ITI graphs,
we wrote a small program that partitioned the sets of all graphs on~9 and on 11~vertices
as $A\cup AB\cup B$ in all possible ways satisfying $|A|=|B|+1$ and $|AB|=2$,
and then added the new vertices $a,b,c$ and $d$ in the corresponding manner.
Among graphs on 9~vertices,
the program found many examples that in this way led to larger ITI graphs
within the first minute of execution,
while on 11~vertices
the program found more than 14,000 ways to build larger ITI graphs
from the first 505,000 graphs with 11 vertices processed during a single night.
This was enough to suggest that this setup can really lead to an infinite ITI family.

Assume now that $|A|=k$ (so that $|B|=k-1$) for some $k\geq 1$.
Transmissions of the vertices $a,b,c,d$ under this general structure are as follows:
\begin{alignat*}{2}
Tr(a) &= |A|+|AB|+2|B|+4   &&= 3k+4, \\
Tr(b) &= 2|A|+|AB|+|B|+4   &&= 3k+5, \\
Tr(c) &= 2|A|+2|AB|+3|B|+6 &&= 5k+7, \\
Tr(d) &= 3|A|+2|AB|+2|B|+6 &&= 5k+8, \\
\end{alignat*}
while for $u\in A$, $v\in B$ and $w\in AB$ we have, respectively:
\begin{eqnarray}
\nonumber
Tr(u) 
&=& [\deg_A(u) + 2(|A|-1-\deg_A(u))] \\
\nonumber
&+& [\deg_{AB}(u) + 2(|AB|-\deg_{AB}(u))] \\
\nonumber
&+& [\deg_B(u) + 2\deg^{(2)}_B(u) + 3(|B|-\deg_B(u)-\deg^{(2)}_B(u))] + 8 \\
\label{eq-tr-uinA}
&=& 5k+7 - [\deg_A(u) + \deg_{AB}(u) + 2\deg_B(u) + \deg^{(2)}_B(u)],
\end{eqnarray}
\begin{eqnarray}
\nonumber
Tr(v)
&=& [\deg_A(v) + 2\deg^{(2)}_A(u) + 3(|A|-\deg_A(v)-\deg^{(2)}_A(v))] \\
\nonumber
&+& [\deg_{AB}(v) + 2(|AB|-\deg_{AB}(v))] \\
\nonumber
&+& [\deg_B(v) + 2(|B|-1-deg_B(v))] + 8 \\
\label{eq-tr-vinB}
&=& 5k+8 - [2\deg_A(v) + \deg^{(2)}_A(v) + \deg_{AB}(v) + \deg_B(v)],
\end{eqnarray}
and
\begin{eqnarray}
\nonumber
Tr(w)
&=& [\deg_A(w) + 2(|A|-\deg_A(w))] \\
\nonumber
&+& [\deg_{AB}(w) + 2(|AB|-1-deg_{AB}(w))] \\
\nonumber
&+& [\deg_B(w) + 2(|B|-\deg_B(w))] + 6 \\
\label{eq-tr-winAB}
&=& 4k+6 - [\deg_A(w) + \deg_{AB}(w) + \deg_B(w)],
\end{eqnarray}
where $\deg_X(y)$ denotes the number of vertices of~$X$ adjacent to~$y$,
while $\deg^{(2)}_X(y)$ denotes the number of vertices of~$X$ at distance two from~$y$
in the subgraph induced by $A\cup AB\cup B$.

\begin{figure}[ht!]
\begin{center}
\includegraphics[height=1.35in]{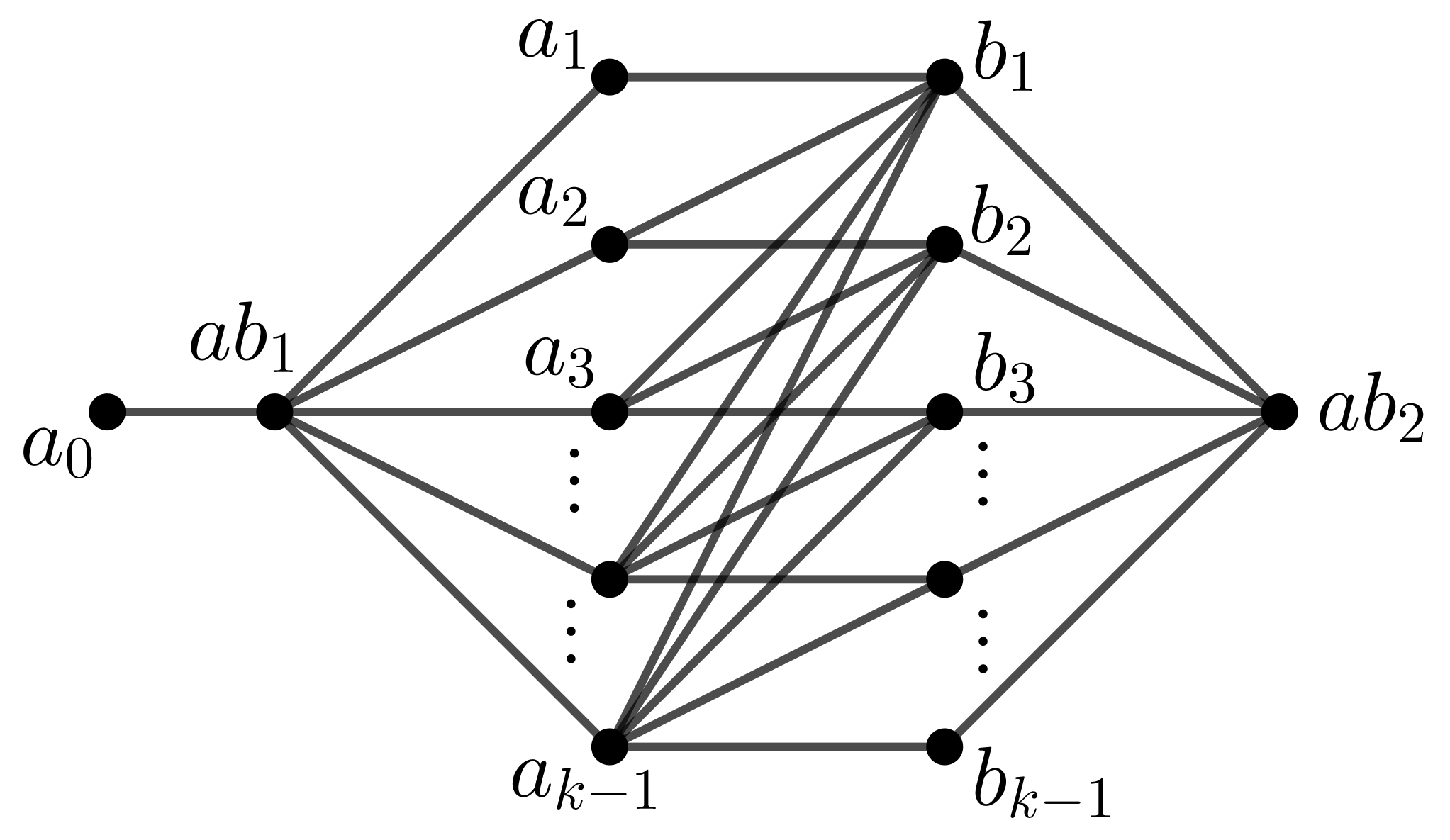}
\end{center}
\caption{Particular structure of the subgraph induced by $A\cup AB\cup B$.}
\label{fig-ITI-AuABuB-subgraph}
\end{figure}

Let us now introduce a particular structure for the subgraph induced by $A\cup AB\cup B$,
as illustrated in Fig.~\ref{fig-ITI-AuABuB-subgraph}.
The set $A$ is made up of vertices $a_0,a_1,\dots,a_{k-1}$,
the set $B$ of vertices $b_1,\dots,b_{k-1}$,
and the set $AB$ contains vertices $ab_1$ and~$ab_2$.
Adjacencies are set so that
the vertex $a_i$ is adjacent to the vertices $b_1,\dots,b_i$ for each $i=0,\dots,k-1$,
hence $\deg_B(a_i)=i$ and $\deg_A(b_j)=k-j$ for each feasible $i$ and~$j$.
With this structure, we have
\begin{alignat*}{2}
& \deg_A(a_i) + \deg_{AB}(a_i) + 2\deg_B(a_i) + \deg^{(2)}_B(a_i) &&= 2i+1, \\
& 2\deg_A(b_j) + \deg^{(2)}_A(b_j) + \deg_{AB}(b_j) + \deg_B(b_j) &&= 2(k-j)+1, \\
& \deg_A(ab_1) + \deg_{AB}(ab_1) + \deg_B(ab_1) &&= k, \\
& \deg_A(ab_2) + \deg_{AB}(ab_2) + \deg_B(ab_2) &&= k-1.
\end{alignat*}
Hence, from Eqs.~(\ref{eq-tr-uinA})--(\ref{eq-tr-winAB}) it follows that
\begin{alignat*}{2}
Tr(a_i) &= 5k+6-2i \qquad\mbox{for $i=0,\dots,k-1$,} \\
Tr(b_j) &= 3k+7+2j \qquad\mbox{for $j=1,\dots,k-1$,} \\
Tr(ab_1) &= 3k+6, \\
Tr(ab_2) &= 3k+7.
\end{alignat*}
Hence, the transmissions of the vertices $a$, $b$, $ab_1$ and~$ab_2$ 
are equal to $3k+4$, $3k+5$, $3k+6$ and $3k+7$, respectively.
Transmissions of the vertices $b_1,\dots,b_{k-1}$ 
take every second value from $3k+9$ to~$5k+5$,
transmissions of the vertices $a_0,a_1,\dots,a_{k-1}$
take every second value from $3k+8$ to~$5k+6$,
while transmissions of the vertices $c$ and~$d$
are equal to $5k+7$ and~$5k+8$, respectively.
Thus,
the transmissions are all distinct and form the interval $[3k+4,5k+8]\cap\mathbb{Z}$.
To honor the proposer of this question,
we decided to name the graph just constructed as the {\em Dobrynin graph} $\mathop{Dob}_k$.
We have therefore proved our final result here.
\begin{theorem}
The Dobrynin graph $\mathop{Dob}_k$ is interval transmission irregular for each $k\geq 1$.
\end{theorem}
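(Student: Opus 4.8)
The plan is to read off all vertex transmissions of $\mathop{Dob}_k$ from the formulas already established above and then verify that they are pairwise distinct and fill the integer interval $[3k+4,5k+8]$. A convenient framing observation is that $\mathop{Dob}_k$ has exactly $4+|A|+|B|+|AB| = 4+k+(k-1)+2 = 2k+5$ vertices, which is precisely the number of integers in $[3k+4,5k+8]$. Consequently it suffices to establish two things: (i) every transmission lies in $[3k+4,5k+8]$, and (ii) the transmissions are mutually distinct. Together these force the set of transmissions to equal $[3k+4,5k+8]\cap\mathbb{Z}$, which settles the ITI property at once.

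To carry out (i), I would compute the transmissions one vertex class at a time. For the four distinguished vertices $a,b,c,d$ the distances are immediate from the diameter-three structure of Fig.~\ref{fig-ITI-structure}, yielding $Tr(a)=3k+4$, $Tr(b)=3k+5$, $Tr(c)=5k+7$ and $Tr(d)=5k+8$. For the remaining vertices I would invoke the general transmission formulas~(\ref{eq-tr-uinA})--(\ref{eq-tr-winAB}), each of which reduces a transmission to a single bracketed combination of the local degree statistics $\deg_A$, $\deg_B$, $\deg_{AB}$ and the distance-two counts $\deg^{(2)}_A$, $\deg^{(2)}_B$ taken inside the subgraph induced by $A\cup AB\cup B$. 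The heart of the argument is then to evaluate these statistics for the explicit staircase adjacency of Fig.~\ref{fig-ITI-AuABuB-subgraph}, in which $a_i$ is joined to $b_1,\dots,b_i$; a direct count gives the four aggregate identities displayed above, whence $Tr(a_i)=5k+6-2i$, $Tr(b_j)=3k+7+2j$, $Tr(ab_1)=3k+6$ and $Tr(ab_2)=3k+7$.

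Finally, for (ii) and to exhibit the range explicitly, I would sort the transmissions by their offset from $3k$. The four vertices $a,b,ab_1,ab_2$ supply the offsets $4,5,6,7$; the vertices $a_0,\dots,a_{k-1}$ supply the even offsets $8,10,\dots,2k+6$; the vertices $b_1,\dots,b_{k-1}$ supply the odd offsets $9,11,\dots,2k+5$; and $c,d$ supply $2k+7,2k+8$. These blocks interleave by parity with neither repetition nor gap, so the transmissions run consecutively from $3k+4$ to $5k+8$, confirming both distinctness and full coverage simultaneously. The main obstacle I anticipate is precisely step~(i): one must pin down the complete induced subgraph of Fig.~\ref{fig-ITI-AuABuB-subgraph}, including the adjacencies of $ab_1$ and $ab_2$, and then correctly tally the distance-two terms $\deg^{(2)}_A(b_j)$ and $\deg^{(2)}_B(a_i)$, since these depend on the global structure of the staircase rather than on a single vertex's degree. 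Once these aggregate degree identities are verified, the interval conclusion is purely arithmetic.
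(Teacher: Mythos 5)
Your proposal is correct and follows essentially the same route as the paper: compute $Tr(a),Tr(b),Tr(c),Tr(d)$ directly, apply the general formulas for vertices in $A$, $B$, $AB$ to the staircase adjacency to get $Tr(a_i)=5k+6-2i$, $Tr(b_j)=3k+7+2j$, $Tr(ab_1)=3k+6$, $Tr(ab_2)=3k+7$, and then observe that the even and odd parity classes interleave to fill $[3k+4,5k+8]\cap\mathbb{Z}$ exactly. Your preliminary counting observation that $|V_{\mathop{Dob}_k}|=2k+5$ matches the interval length is a nice sanity check but ends up redundant, since your final interleaving argument (like the paper's) establishes distinctness and full coverage simultaneously.
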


Fig.~\ref{fig-Dobrynin} shows the Dobrynin graphs for the first few values of~$k$.
Although drawn differently,
note that $\mathop{Dob}_1$ is also the first graph from left in the first row,
while $\mathop{Dob}_2$ is the first graph from right in the second row of Fig.~\ref{fig-small-ITI}.

\begin{figure}[ht!]
\begin{center}
\includegraphics[width=0.65\textwidth]{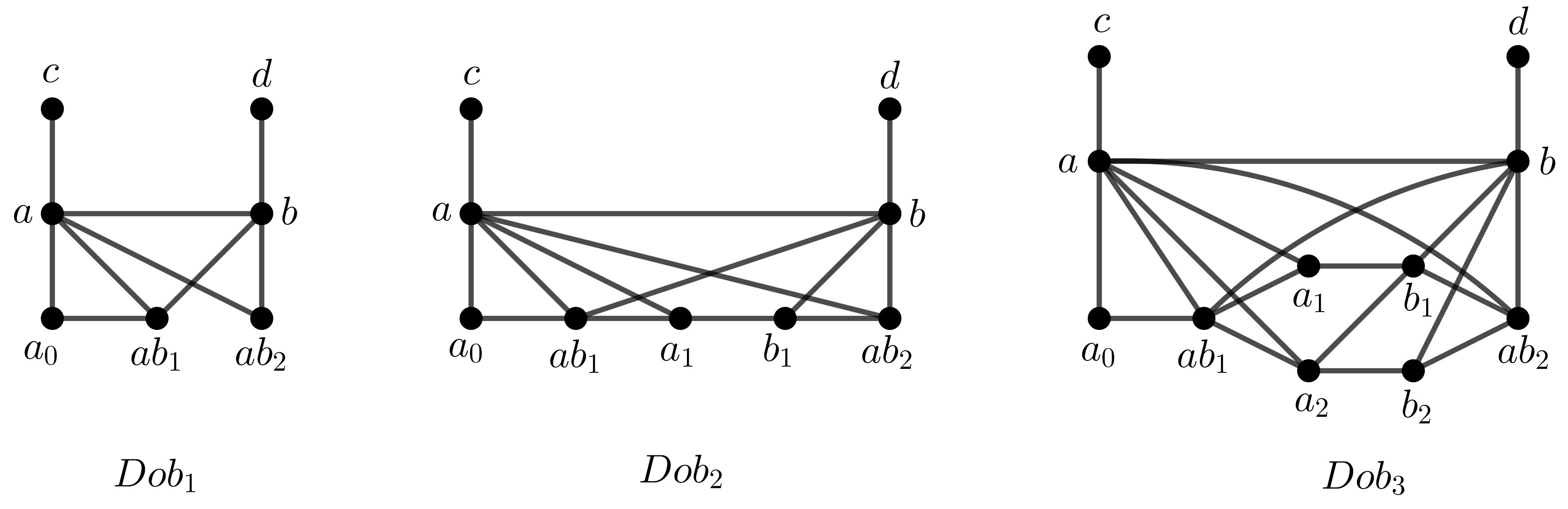}
\end{center}
\caption{Dobrynin graphs for $1\leq k\leq 3$.}
\label{fig-Dobrynin}
\end{figure}

\section{Conclusion}

At the end, 
while ``a picture is worth thousand words''
we have seen here that the drawings in Fig.~\ref{fig-small-ITI} were actually a bit misleading,
as the structure of two relatively dense Dobrynin graphs was not easily recognizable from the drawings alone.
This structure was recognized only 
when a number of descriptive statistics was calculated for a relatively large number of ITI graphs on 11 vertices.
Nevertheless, the ITI graph with the simplest structure in Fig.~\ref{fig-small-ITI} 
did lead to the discovery of results presented in Section~\ref{sc-chordal-paths}.

On the other hand,
the existence of an infinite ITI family also serves to at least partially justify an abundance of TI graphs 
that we observed in our computational studies.
Thanks to Theorem~\ref{th-product-with-mti},
whenever $G$ is an arbitrary (modulo) transmission irregular graph
then the family of Cartesian products $G\square\mathop{Dob}_k$, for all $k$ relatively prime to~$|V_G|$,
forms an infinite family of (modulo) transmission irregular graphs,
making further search for infinite families of TI graphs somewhat redundant,
unless very specific conditions are put for such a family,
such as those posed by Dobrynin~\cite{dobr2} in his question that we answered here.

Finally, while working on the Dobrynin's question we have seen a multitude of ITI graphs
on various numbers of vertices from 7 all the way up to 25,
but interestingly we have not come across a single ITI graph on 10, 14, 18 or 22 vertices.
We are thus tempted to conjecture that 
{\em there does not exist any interval transmission irregular graph on $4k+2$ vertices for $k\in\mathbb{N}$}.

\section*{Acknowledgements}

This work was supported and funded by Kuwait University Research Grant No. SM04/19.

\end{document}